\documentclass[12pt]{article}

\usepackage{amsmath, amsthm, amscd, amsfonts, amssymb}

\newtheorem{theorem}{Theorem}[section]
\newtheorem{proposition}[theorem]{Proposition}
\newtheorem{claim}{Claim}

\newtheorem{lemma}[theorem]{Lemma}

\newtheorem{remark}[theorem]{Remark}

\def\PP{\mathbb{P}}
\def\edo{\end{document}}
\def\loc{_{\rm loc}}
\def\divv{{\rm div }}
\def\rrd{{\mathbb{R}^d}}

\def\calf{{\mathcal{F}}}
\def\calm{{\mathcal{M}}}

\def\cald{{\mathcal{D}}}
\def\calx{{\mathcal{X}}}

\def\call{{\mathcal{L}}}

\def\calp{{\mathcal{P}}}

\def\vsp{\vspace*{1,5mm}\\ }

\def\mk{\medskip }

\def\n{\noindent }
\def\dd{\displaystyle}

\def\barr{\begin{array}}
\def\earr{\end{array}}

\def\bit{\begin{itemize}}
\def\eit{\end{itemize}}

\def\FP{Fokker--Planck}

\def\1{^{-1}}

\def\E{{\mathbb{E}}}
\def\nn{{\mathbb{N}}}
\def\PP{{\mathbb{P}}}
\def\rr{{\mathbb{R}}}
\def\9{{\infty}}
\def\lbb{{\lambda}}
\def\wt{\widetilde}
\def\ov{\overline}
\def\vf{{\varphi}}

\def\ooo{{\Omega}}
\def\pp{{\partial}}
\def\vp{{\varepsilon}}

\def\pas{\mathbb{P}\mbox{-a.s.}}
\def\ff{\forall }
\def\({\left(}
\def\){\right)}
\def\<{\left<}
\def\>{\right>}

\def\fpe{Fokker--Planck equation}

\def\mkve{McKean--Vlasov equation}

\title{Probabilistic representation of solutions to~the~parabolic $p$-Laplace  equation} 
\author{Viorel Barbu\thanks{Octav Mayer Institute of Mathematics of  Romanian Academy,  Ia\c si, Romania.  Email:~vb41@uaic.ro} \and Michael Röckner\footnote{Faculty of Mathematics, Bielefeld University, 33615 Bielefeld, Germany and Academy of Mathematics and System Sciences, CAS, Beijing.}} 
\date{}

\begin{document}
\maketitle

\begin{abstract}
This work is concerned with the probabilistic representation of solutions to the $p$-Laplace evolution equation $\frac{\pp u}{\pp t}=\divv(|\nabla u|^{p-2}\nabla u)$ in $(0,\9)\times\rrd$, $u(0,x)=u_0(x),$ $x\in\rrd$. One proves that, if $p\ge4$, 
and if $u_0$ is a probability density with compact support and $u_0\in L^2$, $|\nabla u_0|\in L^\9$, then  
$u$ can be represented as $u(t,x)dx=\call_{X(t)}(dx)$, where $\call_{X(t)}$ denotes the time marginal law of $X$ at time $t$ with $X$ being a probabilistically weak solution to a corresponding  McKean--Vlasov stochastic diffe\-ren\-tial equation. This result is based on a new second order global re\-gu\-la\-rity result for the weak solutions to the parabolic $p$-Laplace equation.\\
{\bf MSC Codes:} 35B40, 35Q84, 60H10.\\
{\bf Keywords:} \fpe, stochastic, semigroup, \mkve, superposition principle.
\end{abstract} 

\section{Introduction}\label{s1}
Consider herein the nonlinear parabolic Cauchy problem
\begin{equation}\label{e1.1}
\barr{ll}
\dd\frac{\pp u}{\pp t}(t,x)=\divv(|\nabla u(t,x)|^{p-2}\nabla u(t,x)),&t>0,\ x\in\rrd,\vsp
u(0,x)=u_0(x),&x\in\rrd,\earr\end{equation}
where $d\ge1$ and  $1<p<\9$.\newpage

Following \cite{3} (see, also, \cite{4}), we associate with \eqref{e1.1}, rewritten as the \FP\ equation
\begin{equation}\label{e1.2}
\barr{ll}
\dd\frac{\pp u}{\pp t}-\Delta(|\nabla u|^{p-2}u)+\divv(\nabla(|\nabla u|^{p-2})u)=0,&t>0,\ x\in\rrd,\vsp
u(0,x)=u_0(x),&x\in\rrd,\earr\end{equation}
where $u_0$ is a probability density, the McKean-Vlasov SDE
\begin{equation}\label{e1.3}
\barr{rl}
dX(t)\!\!\!&=\nabla(|\nabla u(t,X(t))|^{p-2})dt+\sqrt{2}|\nabla u(t,X(t))|^{\frac{p-2}2}dW(t),\ t>0,\vsp\call_{X(t)}\!\!\!&=u(t,x)dx,\ t\ge0,\earr
\end{equation}
where $W(t)$ is a (standard) $d$-dimensional Brownian motion on some probability space $(\ooo,\calf,\mathbb{P})$.

We recall that $u\in L^1_{\rm loc}(0,\9;W^{1,1}_{\rm loc}(\rrd))$ is a {\it distributional solution} to \eqref{e1.2}~if
\begin{equation}\label{e1.4}
\barr{c}
|\nabla u|^{p-2}\in L^1\loc((0,\9);W^{1,1}\loc(\rrd)), \ 
|\nabla u|^{p-2}u\in L^1\loc(0,\9;\rrd),\vsp
\nabla(|\nabla u|^{p-2})u\in L^1\loc((0,\9)\times\rrd;\rrd),\earr \end{equation}
and
\begin{equation}\label{e1.5}
\barr{l}
\dd\int^\9_0\int_\rrd u\(\frac{\pp\vf}{\pp t}+|\nabla u|^{p-2}\Delta\vf+\nabla(|\nabla u|^{p-2})\cdot\nabla\vf\)dxdt\vsp
\qquad+\dd\int_\rrd\vf(0,x)u_0(x)dx=0,
\earr\end{equation}for all $\vf\in C^\9_0([0,\9)\times\rrd).$

A {\it probabilistically weak solution} to \eqref{e1.3} is a triple consisting of a filtered probability space $(\ooo,\calf,(\calf_t)_{t\ge0},\PP)$, a continuous  $(\calf_t)$-adapted $\rrd$-valued process $X=(X(t))_{t\ge0}$ and an $(\calf_t)$-Brownian motion $W=(W(t))_{t\ge0}$ such that
\begin{equation}\label{e1.5prim}
\E\left[\int^T_0(|\nabla(|\nabla u(t,X(t))|^{p-2})+|\nabla u(t,X(t))|^{p-2})dt\right]<\9,\ \ff T>0,\end{equation}
 and $\pas$
$$\barr{r}
X(t)=X_0+\dd\int^t_0\nabla(|\nabla u(s,X(s))|^{p-2})ds+\sqrt{2}\dd\int^t_0|\nabla u(s,X(s))|^{\frac{p-2}2}dW(t),\\ \ff t\ge0.\earr$$
By \cite[Section 2]{1a} (see also \cite[Chapter 5]{2}),   for any $\calp$ (= all probability measures on $\rrd$)-valued weakly continuous distributional solution $u$ to \eqref{e1.2} satisfying the following global regularity result\newpage \ \vspace*{-16mm}
\begin{equation}\label{e1.5secund}
\int^T_0\int_\rrd(|\nabla(|\nabla u(t,x)|^{p-2}
+|\nabla u(t,x)|^{p-2})u(t,x))dxdt<\9,\ \ff t\ge0,	\end{equation}
(which is also a necessary requirement in view or \eqref{e1.5prim}),
 there exists a (probabilistically) weak solution $X=(X(t))_{t\ge0}$ to SDE \eqref{e1.3} such that $\call_{X(t)}(dx)=u(t,x)dx$, $t\ge0$. This means that we obtain a probabilistic representation of a solution $u$ to the parabolic $p$-Laplace equation, rewritten as \eqref{e1.2}, in the sense that $u$ is a time marginal law density of the solution $X$ to SDE \eqref{e1.3}. In \cite{3}, such a result was proved for the Barenblatt solutions to equation \eqref{e1.1} and extended to Leibenson's equation in \cite{4}. In these cases, the cor\-res\-po\-nding solution $X$ to \eqref{e1.3} is even a probabilistically strong solution and the path laws of the solutions to \eqref{e1.3} form a nonlinear Markov process in the sense of McKean. The extension of these probabilistic representation results to a more general class of distributional solutions to equation \eqref{e1.1} is a challenging objective and this work is a step of this program. More precisely, we prove the global regularity result in Theorem \ref{t3.1} below, which is in fact stronger than \eqref{e1.5secund}, for bounded initial probability densities. Then, as just mentioned, by applying \cite[Section 2]{1a} which, in turn, is based on the (linear) {\it superposition principle} from \cite[Theorem 2.5]{7}, we obtain a weak solution to \eqref{e1.3}. This is formulated and proved in Section \ref{s4} (see Theorem \ref{t4.1}) for $p\ge4$ if $u(0)=u_0$ has compact support $u_0\in L^2(\rrd)$, $|\nabla u_0|\in L^\9(\rrd)$, and is a pro\-ba\-bi\-li\-ty density. For this purpose, in Sections \ref{s2} and  \ref{s3} we prove some sharp global second order estimates for weak solutions to equation \eqref{e1.1} which to the best of our knowledge are new. We refer at this point to a related local estimate in the paper \cite{9b}. However, it is in time only for  compact intervals in $(0,T)$ which is not sufficient for our purpose (see Remark \ref{r3.3}). 
 
 Finally, we would like to emphasize that probabilistic representations of solutions to nonlinear parabolic equations of the type as in this paper was first suggested in McKean's seminal paper \cite{9bb}. So, we realize his vision in this paper for the parabolic $p$-Laplace equation for a large class of initial data. 
 
  As already anticipated by McKean, having a solution $X=(X(t))_{t\ge0}$ to the SDE \eqref{e1.3} bears valuable information for its time marginal law $u$, i.e., the solution to \eqref{e1.2}, since one has all methods from stochastic analysis at hand to analyze $X$,  as, e.g., applying It\^o's formula, (semi) martingale theory, stochastic solutions to Dirichlet problems and, in particular, Malliavin calculus to prove further regularity of $u$. We refer to the recent work \cite{5c}, where some parts of such methods have already been exploited for the generalized porous media equation. To do the same for the parabolic $p$-Laplace equation \eqref{e1.2} is our subject in a forthcoming work.\mk

\noindent{\bf Notations.} For $1\le p\le\9$, $L^p(\rrd)$ (also denoted $L^p$) stands for the usual real-valued $L^p$-spaces on $\rrd$ with the norm $|\cdot|_p$. For an open set $B\subset\rrd$, $W^{1,p}(B)$  is the Sobolev space $\{u\in L^p(B);$ $D_iu\in L^p(B),\ i=1,...,d\}$ with the standard norm $\|\cdot\|_{W^{1,p}(B)}$,  where $D_i u=\frac\pp{\pp x_i}u$ is the  distributional derivative of $u$.  We denote by $W^{-1,p'}(B)$, $\frac1{p'}=1-\frac1p,$ the dual space of $W^{1,p}(B)$. We set $W^{1,p}=W^{1,p}(\rrd)$, denote by $W^{1,p}\loc$ the corresponding local space and by $W^{2,p}$ the space $\{u\in L^p;\ D_ju\in L^p,\ D_iD_ju\in L^p;\ i,j=1,...,d\}$. By $C^\9_0((0,\9)\times\rrd)$ we denote the space of infinitely differentiable real valued functions with compact support in $(0,T)\times\rrd$. By $\cald'(\rrd)$, respectively $\cald'((0,T)\times\rrd)$ we denote the space of Schwartz distributions on $\rrd$ and $(0,T)\times\rrd$, respectively. We use also the notations
$$W^{1,2}(B)=H^1(B),\ W^{1,2}\loc=H^1\loc,\ W^{2,2}=H^2,\ W^{-1,2}=H\1.$$
Given a Banach space $H$ we denote by $L^p(0,T;H)$, $0<T\le\9$, the space of Bochner measurable $p$-integrable functions $u:(0,T)\to H$. By $C([0,T];H)$ we denote the space of $H$-valued continuous functions $u:[0,T]\to H$. By $W^{1,p}([0,T];H)$ we denote the Sobolev space 
$$\left\{u\in L^p(0,T;H);\frac{du}{dt}\in L^p(0,T;H)\right\},$$ where $\frac{du}{dt}$ is the $H$-valued distributional derivative of $u$ (see, e.g., \cite{1},\ p.23). We recall that each $u\in W^{1,p}([0,T];H)$ is absolutely continuous on $[0,T]$ and $\frac{du}{dt}(t)$ exists, a.e. on $(0,T)$. Denote by $\cald'(0,T;H\1)$ the space of $H\1$-valued distributions on $(0,T)$. By $\calm_b$ we denote the space of bounded-Radon measures on $\rrd$. For each $0<T\le\9$, $L^p((0,T)\times\rrd)$ is the standard space of Lebesgue $p$-integrable real valued functions on $(0,T)\times\rrd.$ The scalar product of $L^2$ is denoted by $(\cdot,\cdot)_2$.  

\section{General existence theory for the Cauchy problem \eqref{e1.1}}\label{s2}
\setcounter{equation}{0}

Herein we shall briefly recall some standard existence results for the weak solutions to equation \eqref{e1.1}. 
Problem \eqref{e1.1} can be treated as an infinite dimensional Cauchy problem in the space $H=L^2$, namely,
\newpage 
\begin{equation}\label{e2.1}
\barr{l}
\dd\frac{du}{dt}(t)+Au(t)=0,\ t>0,\vsp
u(0)=u_0,\earr\end{equation}
where the operator $A:D(A)\subset H\to H$ is defined by
\begin{equation}\label{e2.2}
\barr{rl}
Au\!\!\!&=-\divv(|\nabla u|^{p-2}\nabla u),\ \ff u\in D(A),\vsp
D(A)\!\!\!&=\{u\in L^2;\nabla u\in L^p,\divv(|\nabla u|^{p-2}\nabla u)\in L^2\}.\earr\end{equation}
Here, $\nabla$ and $\divv$ are taken in the sense of Schwartz distributions on $\rrd$ and $\frac{du}{dt}$ is the distributional 
 derivative of the function $u:[0,\9)\to H$. More exactly, we look for solutions $u$ in the Sobolev space
$$W^{1,2}\loc([0,\9);H)=\{u\in W^{1,2}([0,T];H),\ \ff T>0\},$$
which satisfy \eqref{e1.2} in the weak or variational sense. To get the main existence result we shall prove first 
 the proposition below. 

\begin{proposition}\label{p2.1}
The operator $A$ is $m$-accretive $($maximal monotone$)$ in $H\times H$ and
\begin{equation}\label{e2.3}
A=\pp\Phi,\end{equation}
where $\Phi:H\to\,]-\9,+\9]$ is the function
\begin{equation}\label{e2.4}
\Phi(u)=\left\{\barr{ll}
\dd\frac1p\int_\rrd|\nabla u(x)|^pdx&\mbox{ if }|\nabla u|\in L^p,\vsp
+\9&\mbox{ otherwise},\earr\right.\end{equation}
which is convex and lower semicontinuous. 

Herein, $\pp\Phi:H\to H$ is the subdifferential of the function $\Phi$, that is,
\begin{equation}\label{e2.5}
\pp\Phi(u)=\{w\in H;\,\Phi(u)\le\Phi(v)+(w,u-v)_2,\ \ff v\in H\}.\end{equation}
\end{proposition}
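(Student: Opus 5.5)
We first check that $\Phi$ is convex and lower semicontinuous on $H=L^2$. Convexity is immediate from the convexity of $r\mapsto\frac1p|r|^p$ on $\rrd$ and the linearity of $u\mapsto\nabla u$, since $\Phi(\lbb u+(1-\lbb)v)\le\lbb\Phi(u)+(1-\lbb)\Phi(v)$ pointwise under the integral. For lower semicontinuity, take $u_n\to u$ in $L^2$ with $\liminf\Phi(u_n)=\ell<\9$. Passing to a subsequence, $\{\nabla u_n\}$ is bounded in $L^p$. Since $1<p<\9$, a further subsequence converges weakly in $L^p$ to some $g$. Testing against $C^\9_0$ fields shows $g=\nabla u$ in $\cald'$. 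Weak lower semicontinuity of the $L^p$ norm then gives $\Phi(u)\le\ell$. Hence $\Phi$ is proper, convex and l.s.c., and by Minty–Rockafellar $\pp\Phi$ is maximal monotone ($m$-accretive) in $H\times H$. It therefore suffices to prove $A=\pp\Phi$.

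\emph{Inclusion $A\subset\pp\Phi$.} Let $u\in D(A)$ and $w=Au=-\divv(|\nabla u|^{p-2}\nabla u)\in L^2$. For $v\in H$ with $\Phi(v)<\9$, the pointwise convexity inequality gives
$$\tfrac1p|\nabla u|^p-\tfrac1p|\nabla v|^p\le|\nabla u|^{p-2}\nabla u\cdot(\nabla u-\nabla v).$$
Integrating, it remains to justify the integration by parts
$$\int_\rrd|\nabla u|^{p-2}\nabla u\cdot\nabla(u-v)\,dx=(w,u-v)_2 .$$
Here $z=u-v\in L^2$, $\nabla z\in L^p$, $\eta=|\nabla u|^{p-2}\nabla u\in L^{p'}$ and $\divv\eta\in L^2$. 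We approximate $z$ by $z_n=\rho_n*(\chi_n z)$, using a cutoff $\chi_n(x)=\chi(x/n)$ and mollification. Then $z_n\to z$ in $L^2$, and $\nabla z_n\to\nabla z$ in $L^p$, because the extra term $z\nabla\chi_n$ is $O(1/n)$ in $L^2$ and, being supported on $|x|\sim n$, is also controlled in $L^p$ after localization. For the test functions $z_n$ the identity is just the definition of the distributional divergence, and we pass to the limit using the pairings $L^{p'}\times L^p$ and $L^2\times L^2$.

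\emph{The error term $z\nabla\chi_n$ is the main obstacle.} If $p>2$, the term $z\nabla\chi_n$ need not be small in $L^p$ when we only know $z\in L^2$. I would instead handle it directly: its contribution to the pairing is
$$\int \eta\cdot\nabla\chi_n\, z\,dx,$$
which is bounded by $\frac Cn\int_{n<|x|<2n}|\eta||z|\,dx$. By Hölder this is at most $C n^{-1}|\eta|_{L^{p'}}\,|z|_{L^p(B_{2n})}$. For $|z|_{L^p(B_{2n})}$ I would use a Gagliardo–Nirenberg or Sobolev estimate on annuli together with $z\in L^2$, $\nabla z\in L^p$. If this fails, there is an alternative route: prove $A\supset$ the restriction of $\pp\Phi$ to a core and use maximality. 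Indeed, $A$ is monotone by the inequality above, applied on smooth compactly supported functions where no boundary terms arise. So it suffices to show $R(I+A)=H$: for $f\in L^2$, minimize $\frac12|u|_2^2+\Phi(u)-(f,u)_2$, which by the convexity and l.s.c. already shown has a unique minimizer. Its Euler–Lagrange equation $u-\divv(|\nabla u|^{p-2}\nabla u)=f$ in $\cald'$ puts $u\in D(A)$ with $u+Au=f$.

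\emph{Conclusion.} Every $u\in D(A)$ has $Au\in\pp\Phi(u)$, by the integration-by-parts argument, or alternatively by monotonicity plus the range condition. Also $\pp\Phi$ is maximal monotone and $A$ is a monotone operator contained in it with $R(I+A)=H$. Hence $A$ is itself maximal, and therefore $A=\pp\Phi$.
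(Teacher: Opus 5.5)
Your plan has the same skeleton as the paper's proof. First, $A\subset\partial\Phi$ follows from the pointwise inequality $\frac1p|a|^p-\frac1p|b|^p\le|a|^{p-2}a\cdot(a-b)$ plus an integration by parts. Second, the range condition $R(I+A)=H$ holds. Third, maximality of $\partial\Phi$ upgrades the inclusion to equality. The differences are these:
\begin{itemize}
\item You obtain $R(I+A)=H$ by minimizing $\frac12|u|_2^2+\Phi(u)-(f,u)_2$ and reading off the Euler--Lagrange equation against $C_0^\infty$ test functions. The paper instead applies Minty--Browder to $A_V\colon V\to V'$. Both are fine, and yours is more self-contained once lower semicontinuity is proved.
\item You actually prove that $\Phi$ is convex and lower semicontinuous, which the paper only asserts.
\item You address the identity $(Au,u-v)_2=\int|\nabla u|^{p-2}\nabla u\cdot\nabla(u-v)\,dx$, which the paper states in \eqref{e2.6} ``by (2.2)'' without justification.
\end{itemize}

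The step needing repair is exactly that identity, and your fallback route does not get around it. Monotonicity of $A$ checked on smooth compactly supported functions says nothing about $(Au-Av,u-v)_2$ for general $u,v\in D(A)$; for those, the formula you need is the same integration by parts. Without monotonicity on all of $D(A)$ (at least injectivity of $I+A$ there), the minimization only shows that $(I+\partial\Phi)^{-1}f$ is \emph{one} solution of $u+Au=f$. It does not show that every $u\in D(A)$ arises this way, so $Au\in\partial\Phi(u)$ does not follow. Hence you must prove $\int\eta\cdot\nabla z\,dx=-\int(\mathrm{div}\,\eta)\,z\,dx$ for $\eta\in L^{p'}$ with $\mathrm{div}\,\eta\in L^2$ and $z\in V$. (The paper's identification $D(A)=\{u\in V;\,A_V(u)\in H\}$ silently uses the same fact.)

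Your first route does this, and your worry about $z\nabla\chi_n$ is unfounded. The simplest way is to mollify first and cut off second. For $p\ge2$ set $z_\delta=\rho_\delta*z$. Then $z_\delta\in L^2\cap L^\infty\subset L^p$, $\nabla z_\delta\to\nabla z$ in $L^p$, and $|z_\delta\nabla\chi_n|_p\le Cn^{-1}|z_\delta|_p\to0$. So $C_0^\infty$ is dense in $V$ for the norm $|z|_2+|\nabla z|_p$. The identity is continuous in $z$ for that norm, so it extends from $C_0^\infty$ to $V$.

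Alternatively, take the annulus $A_n=\{n<|x|<2n\}$. Poincar\'e--Wirtinger on $A_1$ plus scaling gives $n^{-1}|z|_{L^p(A_n)}\le C|\nabla z|_{L^p(A_n)}+Cn^{d/p-d/2-1}|z|_{L^2(A_n)}$. For $p>2$ this tends to $0$, and it is the annulus estimate you were reaching for.

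For $p<2$, your remark that $z\nabla\chi_n$ is controlled in $L^p$ by localization works only when $p\ge 2d/(d+2)$. Below that, use $z\in L^{p^*}$ from the Sobolev inequality, which gives $n^{-1}|z|_{L^p(A_n)}\le C|z|_{L^{p^*}(A_n)}\to0$. With this filled in, your argument is complete and more careful than the paper's.
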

This function is called the {\it potential} of the operator $A$.

\begin{proof} By \eqref{e2.2} we see that
\begin{equation}\label{e2.6}
(Au,u-v)_2=\int_\rrd|\nabla u|^{p-2}\nabla u\cdot(\nabla u-\nabla v)dx,\ \ff u,v\in D(A),\end{equation}
and, therefore, $A$ is monotone (accretive), that is,
\begin{equation}\label{e2.7}
(Au-Av,u-v)_2\ge0,\ \ff u,v\in D(A).\end{equation}We also have, by
\eqref{e2.4}--\eqref{e2.6},
$$(Au,u-v)\ge\Phi(u)-\Phi(v),\ \ff u\in D(A),\ v\in D(\Phi)=\{v\in H;|\nabla v|\in L^p\}.$$Hence,
\begin{equation}\label{e2.8}
Au\in\pp\Phi(u),\ \ff u\in D(A).\end{equation}
To complete the proof, it remains to be shown that $R(I+A)=H$. To this end, we fix $f\in H$ and consider the equation
\begin{equation}\label{e2.9}
u+Au=f.\end{equation}
Let $V=\{u\in L^2;\nabla u\in L^p\}$ with the norm
$$\|u\|_V=|u|_2+|\nabla u|_p$$and the dual $V'$. Clearly, we have
$$V\subset H\subset V'$$
algebraically and topologically.

Next, we consider the operator $A_V:V\to V'$ defined by
$${}_{V'}\<A_V(u),v\>_V=\int_\rrd|\nabla u|^{p-2}\nabla u\cdot\nabla v\,dx,\ \ff u,v\in V,$$
and note that it is monotone, that is,
$${}_{V'}\<A_V(u)-A_V(\bar u),u-\bar u\>_V\ge0,\ \ff u,\bar v\in V,$$
and demicontinuous, that is strongly-weakly continuous. Then, by the Minty--Browder theorem (see, e.g., \cite{1}, p.~36), the operator $A_V$ is maximal monotone in $V\times V'$. Moreover, the operator $\wt A_V(u)=u+A_V(u)$ is coercive in $V\times V'$, because

$${}_{V'}(\wt A_V(u),u)_V=|u|^2_2+|\nabla u|^p_p,\ \ff u\in V.$$
Since   $\wt A_V$ is also maximal monotone in $V\times V'$, it follows that the range $R(I+\wt A_V)$ of $I+\wt A_V$ is all of $V'$ and, in particular,   $H\subset R(I+\wt A_V)$, where $I$ is the identity operator in $H$. We have
$$Au=A_V(u),\ \ff u\in D(A)=\{u\in V;\,A_V(u)\in H\}.$$
Hence, $R(I+A)=H$ and so \eqref{e2.9} has a solution $u\in D(A)$, as claimed.

Since $\Phi:H\to\,]-\9,+\9]$ is convex and lower-semicontinuous, its subdifferential $\pp\Phi:H\to H$ is maximal monotone. As shown above, $A$ is maximal monotone and so, by \eqref{e2.8}, it follows that $A=\pp\Phi$, as claimed.\end{proof}

By Proposition \ref{p2.1} and the general existence theory for the Cauchy pro\-blem in the Hilbert space $H$ associated with subgradient operators $A=\pp\Phi$, we have (see, e.g., \cite{1}, pp.~143 and 158),
\begin{proposition}
	\label{p2.2} Let $u_0\in\ov{D(A)}$ $($the closure of $D(A)$ in $H)$. Then, for each $T>0$ there is a unique function $$u\in C([0,T];H)\cap\bigcap_{0<\delta<T} W^{1,2}([\delta,T];H)$$ 
	such that $u(t)\in D(A)$, a.e. $t\in(0,T)$, and
\begin{eqnarray}
&t^{\frac12}\,\dd\frac{du}{dt},t^{\frac12}Au\in L^2(0,T;H);\ \Phi(u)\in L^1(0,T)\label{e2.10}\\[1mm]
&\dd\frac{du(t)}{dt}+Au(t)=0,\mbox{ a.e. }t\in(0,T);\ u(0)=u_0.\label{e2.11}
\end{eqnarray}
If $\Phi(u_0)<\9$, then
\begin{eqnarray}
&\dd\frac{du}{dt}\in L^2(0,T;H),\label{e2.12}\\[1mm]
&\Phi(u(t))\le\Phi(u_0),\ \ff t\in[0,T].\label{e2.13}
\end{eqnarray}
Finally, if $u_0\in D(A)$, then
\begin{eqnarray}
&\dd\frac{du}{dt}\in L^\9(0,T;H),\ Au\in L^\9(0,T;H),\label{e2.14}\\[1mm]
&\dd\frac{d^+}{dt}u(t)+Au(t)=0,\ \ff t\in[0,T].\label{e2.15}
\end{eqnarray}
\end{proposition}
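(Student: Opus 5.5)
Proposition \ref{p2.2} is an instance of the classical theory of evolution equations governed by subdifferentials of convex lower semicontinuous functions on a Hilbert space (Brezis' theorem). By Proposition \ref{p2.1} we already know $A=\pp\Phi$ with $\Phi$ convex, l.s.c. and proper on $H=L^2$. So the plan is to check the hypotheses of that theory and translate its conclusions into \eqref{e2.10}--\eqref{e2.15}; essentially no further analysis of the $p$-Laplacian is needed. Concretely, I would proceed as follows.

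\textbf{Step 1: existence and uniqueness.} Since $A$ is $m$-accretive, the Crandall--Liggett theorem gives, for every $u_0\in\ov{D(A)}$, a contraction semigroup $S(t)$ on $\ov{D(A)}$. We set $u(t)=S(t)u_0$, so $u\in C([0,T];H)$, and uniqueness follows from monotonicity \eqref{e2.7}: if $u,v$ are two solutions, then $\frac12\frac{d}{dt}|u-v|_2^2\le0$.

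\textbf{Step 2: smoothing estimates for $u_0\in\ov{D(A)}$.} To obtain \eqref{e2.10} and \eqref{e2.11}, I would work first with the Yosida approximations $A_\lbb=\pp\Phi_\lbb$ (or with $u_0\in D(A)$ and then pass to the limit by density). Fix $v\in D(\Phi)$. Multiplying the equation by $u-v$ and using the subdifferential inequality gives
$$\frac12\frac{d}{dt}|u-v|_2^2+\Phi(u)\le\Phi(v),$$
so $\Phi(u)\in L^1(0,T)$. Next, the chain rule $\frac{d}{dt}\Phi(u)=(Au,u')_2=-|u'|_2^2$ shows that $\Phi(u(t))$ is nonincreasing. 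Multiplying this identity by $t$ and integrating yields
$$\int_0^T t|u'|_2^2\,dt+T\Phi(u(T))\le\int_0^T\Phi(u)\,dt.$$
This gives $t^{1/2}u'\in L^2(0,T;H)$, hence $t^{1/2}Au=-t^{1/2}u'\in L^2(0,T;H)$ and $u\in W^{1,2}([\delta,T];H)$. The estimates are uniform in $\lbb$ (resp. in the approximating data), so we can pass to the limit. The limit equation holds a.e. with $u(t)\in D(A)$ because the graph of $A$ is weakly--strongly closed.

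\textbf{Step 3: the cases $\Phi(u_0)<\9$ and $u_0\in D(A)$.} If $\Phi(u_0)<\9$, integrating the chain rule from $0$ gives $\int_0^T|u'|_2^2\,dt\le\Phi(u_0)$, together with the monotonicity of $\Phi(u(t))$; this yields \eqref{e2.12} and \eqref{e2.13}. If $u_0\in D(A)$, the contraction property gives $|u(t+h)-u(t)|_2\le|S(h)u_0-u_0|_2\le h|A^0u_0|_2$. Hence $u$ is Lipschitz, and $|u'(t)|_2$ is nonincreasing, which gives \eqref{e2.14}. The existence of the right derivative at every $t$ with $\frac{d^+}{dt}u+A^0u=0$, where $A^0$ is the minimal section, is the standard Kōmura--Brezis result. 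Here $A$ is single valued, so $A^0=A$, and \eqref{e2.15} follows.

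\textbf{Main obstacle.} The step that needs the most care is justifying the chain rule $\frac{d}{dt}\Phi(u(t))=(Au,u')_2$ for an $H$-valued function with only $W^{1,2}$ regularity (Brezis' lemma), together with the passage to the limit in Step 2. I would handle both by citing \cite{1} (pp.~143, 158), as the statement itself indicates, rather than redoing them.
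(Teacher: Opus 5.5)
Your proposal is correct and takes the same route as the paper. The paper gives no independent argument: it deduces the statement from $A=\partial\Phi$ (Proposition~\ref{p2.1}) together with the classical Brezis--K\=omura theory for subdifferential evolution equations in Hilbert space, citing \cite{1}, pp.~143 and 158, and your Steps~1--3 (using that $\Phi\ge0$ here) are an accurate sketch of that theory.
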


Applying Proposition \ref{p2.1} to the operator $A$ defined by \eqref{e2.2} on the space $H=L^2$, we get the following existence and uniqueness result for pro\-blem~\eqref{e1.1}.

\begin{theorem}\label{t2.3} Let $u_0\in L^2$. Then, for each $T>0$, there is a unique function
\begin{equation}\label{e2.16}
u\in C([0,T];L^2)\cap \cap\bigcap_{0<\delta<T} W^{1,2}([\delta,T];H)\end{equation}
such that $u(t)\in D(A)$, a.e. $t\in(0,T)$, and 

\begin{eqnarray}
&t^{\frac12}\,\dd\frac{du}{dt} \in L^2(0,T;L^2),\ t^{\frac12}Au\in L^2(0,T;L^2),\ |\nabla u|\in L^p(0,T;L^p),\label{e2.17}\\[1mm]
&\barr{l}
\dd\frac{du}{dt}(t)=\divv(|\nabla u(t)|^{p-2}\nabla u(t)),\mbox{ a.e. }t\in(0,T),\earr\label{e2.18}\\[1mm]
&\dd\frac12|u(t)|^2_2+\int^t_0\int_\rrd|\nabla u(s,x)|^pdxds=\frac12|u_0|^2_2,\ \ff t\ge0.\label{e2.18a}
\end{eqnarray}
If $|\nabla u_0|\in L^p$, then
\begin{eqnarray}
&\dd\frac{du}{dt}\in L^2(0,T;L^2),\ \divv(|\nabla u|^{p-2}\nabla u)\in L^2((0,T)\times\rrd),\label{e2.19}\\[1mm]&|\nabla u(t)|^p_p\le|\nabla u_0|^p_p,\mbox{ a.e. } t\in[0,T]. \label{e2.20}
\end{eqnarray}
Finally, if $u_0\in L^2,\ |\nabla u_0|\in L^p,\ \divv(|\nabla u_0|^{p-2}\nabla u_0)\in L^2,$ then
\begin{eqnarray}
&\dd\frac{du}{dt}\in L^\9(0,T;L^2),\ \divv(|\nabla u|^{p-2}\nabla u)\in L^\9(0,T;L^2),\label{e2.21}\\[1mm]
&\dd\frac{d^+}{dt}\,u(t)=\divv(|\nabla u(t)|^{p-2}\nabla u(t)),\ \ff t\in[0,T).\label{e2.22b}\end{eqnarray}
$($Herein, $\frac{du}{dt}(t)$ is the strong derivative of the function $u:[0,T]\to H$.$)$
\end{theorem}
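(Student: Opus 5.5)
The plan is to read Theorem \ref{t2.3} directly off Propositions \ref{p2.1} and \ref{p2.2}, with $H=L^2$, $A$ as in \eqref{e2.2} and $\Phi$ as in \eqref{e2.4}. Only two points need an argument beyond transcription: the density $\ov{D(A)}=L^2$, and the energy identity \eqref{e2.18a}.

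\emph{Density.} Proposition \ref{p2.2} requires $u_0\in\ov{D(A)}$. Since $A$ is maximal monotone, $\ov{D(A)}=\ov{D(\Phi)}$, so it suffices to check that $D(\Phi)$ is dense in $L^2$. This is clear because $C^\9_0(\rrd)\subset D(A)$. Indeed, for $\vf\in C^\9_0$ the vector field $|\nabla\vf|^{p-2}\nabla\vf$ is compactly supported and $C^1$ when $p\ge2$. For $1<p<2$, I would instead use the abstract fact $\ov{D(A)}=\ov{D(\Phi)}\supset C^\9_0$, which is dense in $L^2$.

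\emph{Regularity and the equation.} With density in hand, Proposition \ref{p2.2} gives \eqref{e2.16}, the first two statements in \eqref{e2.17}, and \eqref{e2.18}, because $Au=-\divv(|\nabla u|^{p-2}\nabla u)$ and $u(t)\in D(A)$ for a.e.\ $t$. The property $|\nabla u|\in L^p(0,T;L^p)$ is equivalent to $\Phi(u)\in L^1(0,T)$. The statements under the hypothesis $|\nabla u_0|\in L^p$, namely $\Phi(u_0)<\9$, follow from \eqref{e2.12} and \eqref{e2.13}. The $L^2$ bound on $\divv(|\nabla u|^{p-2}\nabla u)$ then follows because it equals $\frac{du}{dt}$. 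Likewise, the hypothesis $\divv(|\nabla u_0|^{p-2}\nabla u_0)\in L^2$ means $u_0\in D(A)$, so \eqref{e2.21} and \eqref{e2.22b} are exactly \eqref{e2.14} and \eqref{e2.15}. Uniqueness is the uniqueness part of Proposition \ref{p2.2}; alternatively, it follows from monotonicity \eqref{e2.7}, since $\frac{d}{dt}|u-v|_2^2\le0$.

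\emph{Energy identity.} The expected main obstacle is \eqref{e2.18a}, since it must hold up to $t=0$ even though $\frac{du}{dt}$ is only known to lie in $L^2(\delta,T;H)$. On $[\delta,T]$ the function $u$ lies in $W^{1,2}([\delta,T];H)$, so $t\mapsto\frac12|u(t)|_2^2$ is absolutely continuous there with
$$\frac{d}{dt}\frac12|u(t)|_2^2=\Big(\frac{du}{dt},u\Big)_2=-(Au(t),u(t))_2=-\int_\rrd|\nabla u(t)|^pdx\quad\mbox{a.e.},$$
by \eqref{e2.6} with $v=0$. This gives $\frac12|u(t)|_2^2+\int_\delta^t|\nabla u|_p^pds=\frac12|u(\delta)|_2^2$. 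I then let $\delta\downarrow0$: $u(\delta)\to u_0$ in $L^2$ because $u\in C([0,T];H)$, and $\int_\delta^t|\nabla u|_p^p\,ds\to\int_0^t|\nabla u|_p^p\,ds$ by monotone convergence. The limit integral is finite by $\Phi(u)\in L^1(0,T)$, or directly from the identity itself. Since $T$ is arbitrary, the identity holds for all $t\ge0$.
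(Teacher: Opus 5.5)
Your proposal is correct and follows the paper's own route: the paper also obtains Theorem \ref{t2.3} directly from Propositions \ref{p2.1} and \ref{p2.2} with $H=L^2$. You additionally justify two steps the paper leaves implicit. The first is the density of $D(\Phi)\supset C^\infty_0(\mathbb{R}^d)$ in $L^2$, which gives $\overline{D(A)}=L^2$. The second is the energy identity \eqref{e2.18a}, which you derive on $[\delta,T]$ and then pass to $\delta\downarrow0$ using $u\in C([0,T];H)$ and monotone convergence; both justifications are valid.
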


In the following, we shall call such a function $u$ the {\it weak solution} to the parabolic $p$-Laplace equation \eqref{e1.1}.

We also have
\begin{theorem}\label{t2.4} If $u_0\in L^\9\cap L^2$, then $u\in L^\9((0,T)\times\rrd)$, $|u(t)|_\9\le|u_0|_\9,$ a.e. $t\in(0,T)$. 
	Moreover, if $u_0\in L^1\cap L^2$, then $u(t)\in L^\9(0,T;L^1)$   and, if $u_0\ge0$, a.e. on $\rrd$, then 
\begin{equation}
u\ge0, \mbox{ a.e. on $(0,T)\times\rrd$},\label{e2.24a}\end{equation}
and, if $p\ge d$, we have
\begin{equation}
	\dd	\int_\rrd u(t,x)dx=\int_\rrd u_0(x)dx,\ \ff t\in[0,T].\label{e2.22}
\end{equation}
\end{theorem}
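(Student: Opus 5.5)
We work with the resolvent of $A$ and pass to the semigroup through the exponential formula. Since $A=\pp\Phi$ is $m$-accretive in $H=L^2$ (Proposition \ref{p2.1}), the Crandall--Liggett formula gives $u(t)=\lim_{n\to\9}(I+\frac tnA)^{-n}u_0$ in $L^2$. Each property in Theorem \ref{t2.4} is closed under $L^2$-limits: sign, $L^\9$ bounds and $L^1$ bounds pass by weak lower semicontinuity, and mass preservation passes via $L^1$-convergence, which is obtained separately. So it suffices to prove each property for the resolvent $J_\lbb f=(I+\lbb A)^{-1}f$, $\lbb>0$, and iterate.

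\emph{Step 1 ($L^\9$ and positivity).} Let $u=J_\lbb f$, i.e. $u-\lbb\,\divv(|\nabla u|^{p-2}\nabla u)=f$ in $V'$ with $u\in V$.

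For $M=|f|_\9$, we test with $(u-M)^+\in V$. Its gradient is $\nabla u\,1_{\{u>M\}}$, and the truncation is admissible since $u\in L^2$ and $M\ge0$. This yields
$$\int (u-M)(u-M)^+dx+\lbb\int_{\{u>M\}}|\nabla u|^pdx=\int(f-M)(u-M)^+dx\le0,$$
so $u\le M$. Symmetrically $u\ge-M$, hence $|J_\lbb f|_\9\le|f|_\9$.

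Testing with $u^-$ shows that $f\ge0$ implies $u\ge0$.

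\emph{Step 2 ($L^1$ contraction).} We test the difference of the equations for $u_1=J_\lbb f_1$ and $u_2=J_\lbb f_2$ with $\theta_\vp(u_1-u_2)$, where $\theta_\vp(r)=\max(-1,\min(1,r/\vp))$ is Lipschitz, vanishes at $0$, and hence lies in $V$. Monotonicity of $a\mapsto|a|^{p-2}a$ makes the gradient term nonnegative. Letting $\vp\to0$ gives $|J_\lbb f_1-J_\lbb f_2|_1\le|f_1-f_2|_1$, and in particular $|J_\lbb f|_1\le|f|_1$.

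This estimate first requires $f\in L^1\cap L^2$ so that the integrals make sense. Iterating it and using weak l.s.c. of $|\cdot|_1$ under $L^2$ convergence gives $u\in L^\9(0,T;L^1)$.

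\emph{Step 3 (mass conservation for $p\ge d$).} We expect this to be the main obstacle. The plan is to use the equation \eqref{e2.18} directly: take a cutoff $\vf_R(x)=\vf(x/R)$ with $\vf\in C^\9_0$, $\vf=1$ on $B_1$, and compute
$$\Big|\int u(t)\vf_R\,dx-\int u_0\vf_R\,dx\Big|\le\int_0^t\int|\nabla u|^{p-1}|\nabla\vf_R|\,dxds\le \frac CR\int_0^t\int_{R\le|x|\le2R}|\nabla u|^{p-1}dxds.$$
Hölder with exponents $\frac p{p-1}$ and $p$ on the annulus bounds this by
$$\frac CR\,(tR^d)^{1/p}\Big(\int_0^t\int_{|x|\ge R}|\nabla u|^p\Big)^{\frac{p-1}p}=C't^{1/p}R^{\frac dp-1}\,o(1),$$
using $|\nabla u|\in L^p((0,T)\times\rrd)$ from \eqref{e2.17}. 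For $p\ge d$ the factor $R^{d/p-1}$ stays bounded, so the whole expression tends to $0$ as $R\to\9$. Since $u(t)\in L^1$ by Step 2, dominated convergence on the left gives \eqref{e2.22}, first for a.e. $t$ and then for all $t$ by $L^2$-continuity combined with the $L^1$ bound and Fatou.

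The delicate points are justifying the test with $\vf_R$, which is fine because $u\in W^{1,2}([\delta,T];L^2)$ and $u(t)\in D(A)$ a.e., and the borderline case $p=d$. In that case the $o(1)$ tail of $\int|\nabla u|^p$ is exactly what is needed.
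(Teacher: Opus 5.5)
Your proof is correct. For the $L^\9$ bound, positivity and the $L^1$ bound, however, you take a different route from the paper.

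The paper works directly with the evolution equation \eqref{e2.18}. It uses the chain rule $\big(\frac{du}{dt},\calx_\delta(u)\big)_2=\frac d{dt}\int_\rrd j_\delta(u)\,dx$ to get $\frac d{dt}\int_\rrd j_\delta(u(t))\,dx=-\int_\rrd|\nabla u|^p\calx'_\delta(u)\,dx\le0$, treats $u^-$ the same way, and multiplies the time-dependent equation by $(u-|u_0|_\9)^+$.

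You instead prove the corresponding statements for the resolvent $J_\lbb=(I+\lbb A)^{-1}$. There $u\in V$ solves a variational elliptic problem, so the test functions $(u-M)^+$, $u^-$ and $\theta_\vp(u_1-u_2)$ are admissible without any time regularity or chain rule. You then pass to $u(t)$ through the exponential formula of Remark~\ref{r2.6}, using that the relevant sets (sign, $L^\9$-ball, $L^1$-ball) are closed in $L^2$.

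What each route buys:
\begin{itemize}
\item Your route gives slightly more: the $L^1$-contraction of $J_\lbb$, i.e.\ accretivity of $A$ in $L^1$, not just $|u(t)|_1\le|u_0|_1$. Its cost is reliance on the Crandall--Liggett formula and on identifying the generated semigroup with the solution of Theorem~\ref{t2.3}.
\item The paper's direct technique has the advantage of being reused in Section~\ref{s3} for the difference quotients $u^h_{\vp,i}$ and for $w^\vp_i$. These solve linear equations with time-dependent coefficients, for which no resolvent is available.
\end{itemize}

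For mass conservation you do what the paper does: cutoff, H\"older, boundedness of $R^{d/p-1}$ for $p\ge d$, and the vanishing tail of $\int_0^t\!\int|\nabla u|^p$. The only difference is that you apply H\"older jointly in $(t,x)$.

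One correction in Step~3. The passage ``first for a.e.\ $t$ and then for all $t$ by $L^2$-continuity combined with the $L^1$ bound and Fatou'' does not work as written. Fatou only gives an inequality, and $L^2$-convergence with bounded $L^1$ norms does not prevent mass from escaping to infinity.

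It is also unnecessary. The identity $\int_\rrd u(t)\vf_R\,dx=\int_\rrd u_0\vf_R\,dx-\int_0^t\!\int_\rrd|\nabla u|^{p-2}\nabla u\cdot\nabla\vf_R\,dx\,ds$ holds for every $t\in[0,T]$. To see this, integrate over $[\delta,t]$ and let $\delta\to0$, using $u\in C([0,T];L^2)$ and $|\nabla u|\in L^p((0,T)\times\rrd)$. Your Step~2 gives $u(t)\in L^1$ for every $t$, so dominated convergence applies at each $t$ directly.

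Likewise, your opening remark that mass preservation ``passes via $L^1$-convergence, which is obtained separately'' is never used, since Step~3 does not go through the resolvent. It should be dropped.
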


\begin{proof} We  first prove that, if $u_0\in L^1\cap L^2$, then $u\in L^\9(0,T;L^1)$. Let $\calx$ be the function 
	
\begin{equation}\label{e2.22a}
\calx_\delta(r)=\left\{\barr{rl}
1&\mbox{ for }r\ge\delta,\vsp\dd\frac r\delta&\mbox{ for }|r|<\delta,\vsp-1&\mbox{ for }r\le-\delta.
\earr\right.
\end{equation}
Taking into account that $\calx_\delta(u(t))\in L^2$, $\ff t\in(0,T)$, we have (see, e.g., \cite{1}, p.~158, Lemma 4.1)
$$\(\frac d{dt}u(t),  \calx_\delta (u(t))\)_2=\frac d{dt}\int_\rrd  j_\delta(u(t,x))dx,\mbox{ a.e. }t\in(0,T),$$
where $j_\delta(v)=\int^v_0\calx_\delta(s)ds,$  $\ff v\in\rr$. We get by \eqref{e2.18} 
$$\dd\frac d{dt}\int_\rrd j_\delta(u(t,x))dx
=-\dd\int_\rrd|\nabla u(t,x)|^p\calx'_\delta(u(t,x))dx\le0,\mbox{ a.e. }t>0.$$
For $\delta\to0$, this yields, $\ff t\in[0,T]$,
\begin{equation}\label{e2.26a}
\dd\lim_{\delta\to0}
\int_\rrd j_\delta(u(t,x))dx
\le \dd\int_\rrd |u_0(x)|dx,
\end{equation}
and so   we get   
$$|u(t)|_1\le|u_0|_1,\ \mbox{ a.e. }\ff t\in[0,T],$$as claimed. 
Next, if $u_0\ge0$, a.e. on $\rrd$, we get in a similar way that
$$\frac d{dt}\int_\rrd u^-(t,x)dx\le0,\mbox{ a.e. }t\in(0,T),$$and, therefore, $u\ge0$, a.e. on $(0,T)\times\rrd$. 

Assume now that $u_0\in L^\9$. Then, by \eqref{e2.18} we see that
$$\barr{r}
\dd\frac d{dt}(u(t,x)-|u_0|_\9)-\divv(|\nabla(u-|u_0|_\9)|^{p-2}\nabla(u-|u_0|_\9))=0,\vsp
\hfill\mbox{ a.e. on }(0,T)\times\rrd,\earr$$
and multiplying by $(u-|u_0|_\9)^+$, we get after integration over  $(0,t)\times\rrd$
$$(u(t,x)-|u_0|_\9)^+=0,\mbox{ a.e. }(t,x)\in(0,T)\times\rrd.$$
Hence, $u\le|u_0|_\9$, a.e. on $(0,T)\times\rrd$ and in a similar way it follows $u\ge-|u_0|_\9$, a.e. on $(0,T)\times\rrd.$ Hence, $|u(t,x)|\le|u_0|_\9$, a.e. $(t,x)\in(0,T)\times\rrd$, as claimed.

To get \eqref{e2.22}, we note that by \eqref{e2.18}
$$\frac d{dt}(u(t),\vf_n)=-\int_\rrd|\nabla u(t,x)|^{p-2}
(\nabla u(t,x)\cdot\nabla\vf_n(x))dx,\mbox{ a.e. }t>0,$$
where $\vf_n(x)=\eta\(\frac{|x|^2}n\),$ $\eta\in C^2([0,\9)),$ $\eta(r)=1$, $\ff r\in[0,1]$, $\eta(r)=0$, $\ff r\in[2,\9)$. 
This yields
\begin{equation}\label{e2.27a}
\barr{ll}
\dd\int_\rrd \vf_n(x)u(t,x)dx=\!\!\!&
-\dd\int^t_0\!\int_\rrd|\nabla u(s,x)|^{p-2}
\nabla u(s,x){\cdot}\nabla\vf_n(x)dxds\vsp 
&\dd+\int_\rrd\!\!\vf_n(x)u_0(x)dx.\earr
\end{equation}
Taking into account that, for $B_n=\{x;\sqrt{n}\le|x|\le\sqrt{2n}\},$
$$\barr{l}
\left|\dd\int^t_0\int_\rrd|\nabla u(s,x)|^{p-2}\nabla u(s,x)\cdot\nabla\vf_n(x)dxds\right|\vsp
\qquad\le|\eta'|_\9\dd\frac{2\sqrt{2}}{\sqrt{n}}\int^t_0ds
\(\int_{[|x|\ge\sqrt{n}]}
|\nabla u(s,x)|^pdx\)^{\frac{p-1}p}
(V\!ol(B_n))^{\frac1p}\vsp 
\qquad\le C n^{\frac12\(\frac dp-1\)}\dd\int^t_0ds
\(\int_{[|x|\ge\sqrt{n}]}|\nabla u(s,x)|^pdx\)^{\frac{p-1}p}
\earr$$
and so, letting $n\to\9$ in \eqref{e2.27a}, we get \eqref{e2.22}. 
\end{proof}

An important property of equation \eqref{e1.1} is the finite speed of propagation of the solution $s$. Namely, we have (see, e.g., Theorem 3.4 in \cite{5prim}, and also~\cite{6})

\begin{proposition}\label{p2.5} Let $p>2$, $u_0\in L^2$, and let $u$ be the solution given by Theorem {\rm\ref{t2.3}}. Assume that
\begin{equation}\label{e2.23}
support\ u_0\subset B_R=\{x\in\rrd;\ |x|\le R\}.
\end{equation}
Then,
\begin{equation}\label{e2.24}
support\ u(t,\cdot)\subset B_{2R+R(t)},\ \ff t\ge0,\end{equation}
where
\begin{equation}\label{e2.25}
R(t)=C\,t^{\frac1{d(p-2)+p}}\,|u_0|^{\frac{p-2}{d(p-2)+p}}_1,\ t\ge0,\end{equation}
and $C>0$ is independent of $t$ and $u_0$.
\end{proposition}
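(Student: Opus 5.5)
The plan is a De Giorgi-type energy method for the support, combined with the $L^1$ bound of Theorem \ref{t2.4}. Since $u_0$ may change sign, I first reduce to nonnegative data. Using comparison for the monotone operator $A$, I would show that $|u|$ is dominated by the solution $v$ with initial datum $|u_0|$. Comparison follows from the same $(\cdot)^+$ multiplier argument used in the proof of Theorem \ref{t2.4}. Since $|u_0|_1=|u_0|_1$ for either datum, it suffices to treat $u_0\ge0$, so that $u\ge0$ by \eqref{e2.24a}.

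Next I fix $x_0$ with $|x_0|\ge 2R$ and work on balls $B_\rho(x_0)$ with $\rho\le |x_0|-R$, so that $u_0=0$ on $B_\rho(x_0)$. I test \eqref{e2.18} with $u\,\zeta^p$, where $\zeta$ is a cutoff between $B_{\rho'}(x_0)$ and $B_\rho(x_0)$. This is justified because $u(t)\in D(A)$ a.e. and $u\in W^{1,2}([\delta,T];L^2)$, with $\delta\to0$ handled by the continuity $u\in C([0,T];L^2)$. The result is the local energy inequality
$$\sup_{s\le t}\int_{B_{\rho'}}u^2\,dx+\int_0^t\!\int_{B_{\rho'}}|\nabla u|^p\,dx\,ds\le \frac{C}{(\rho-\rho')^p}\int_0^t\!\int_{B_\rho}u^p\,dx\,ds .$$
To close this, I interpolate $\int u^p$ by Gagliardo--Nirenberg between $|\nabla(u\zeta)|_p$ and the $L^1$ norm, which is bounded by $|u_0|_1$ thanks to Theorem \ref{t2.4} (valid for $u_0\in L^1\cap L^2$). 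The bookkeeping is done along a sequence of radii $\rho_k=\rho_\9+2^{-k}(\rho_0-\rho_\9)$, with energies $E_k=\sup_s\int_{B_{\rho_k}}u^2+\int\!\!\int_{B_{\rho_k}}|\nabla u|^p$.

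Because $p>2$, the resulting recursion is superlinear: $E_{k+1}\le C\,b^k\,(\rho_k-\rho_{k+1})^{-\theta}\,t^{\alpha}|u_0|_1^{\beta}E_k^{1+\varepsilon}$ for some $\varepsilon>0$. The exponents $\alpha,\beta,\theta$ are to be fixed by scaling. Invariance of \eqref{e1.1} under $u_\lambda(t,x)=\lambda^{d}u(\lambda^{d(p-2)+p}t,\lambda x)$ preserves mass, and this forces the combination $t^{1/(d(p-2)+p)}|u_0|_1^{(p-2)/(d(p-2)+p)}$ appearing in \eqref{e2.25}.

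The standard fast-geometric-convergence lemma then gives $E_k\to0$, hence $u=0$ on $B_{\rho_\9}(x_0)$, provided that $\rho_0-\rho_\9\ge R(t)$ with the constant $C$ in \eqref{e2.25} chosen large. The extra $R$ in $2R$ absorbs the distance needed to keep the balls disjoint from $\mathrm{supp}\,u_0$. Covering the region $\{|x|\ge 2R+R(t)\}$ by such balls then gives \eqref{e2.24}.

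I expect the main obstacle to be the nonlinear iteration itself. The initial energy $E_0$ has to be controlled only through $|u_0|_1$ (and not $|u_0|_2$) so that the constants scale correctly. I would first try Stampacchia truncations $(u-k)^+$ in the level variable jointly with the spatial shrinking, so that the smallness of $E_0$ comes from the mass bound via Chebyshev. If this proves too delicate, I would fall back on citing the known result (Theorem 3.4 of \cite{5prim}), since the paper's statement allows it.
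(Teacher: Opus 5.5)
\paragraph{Overall.} The paper gives no proof of this proposition; it only cites \cite[Theorem 3.4]{5prim} and \cite{6}. Your fallback is therefore exactly the paper's route. Your reduction to $u_0\ge0$ is correct and worth keeping: compare $u$ and $-u$ (which solves the same equation) with the solution $v$ of datum $|u_0|$, using the $(\cdot)^+$ multiplier. This matters because such propagation results are usually stated for nonnegative solutions.

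\paragraph{Where the De Giorgi argument fails.} Your self-contained De Giorgi argument has a genuine gap, at the step you treat as bookkeeping. Interpolating against the $L^1$ norm gives a \emph{sublinear} recursion. For compactly supported $w=u\zeta$ one has $|w|_p\le C|\nabla w|_p^{\theta}|w|_1^{1-\theta}$ with $\theta=\frac{d(p-1)}{d(p-1)+p}<1$, hence
\[
\int_0^t|w(s)|_p^p\,ds\le C\,t^{1-\theta}|u_0|_1^{p(1-\theta)}\Big(\int_0^t|\nabla w(s)|_p^p\,ds\Big)^{\theta}.
\]
Combined with Caccioppoli this gives only $E_{k+1}\le Cb^k(\cdots)E_k^{\theta}$, which never forces $E_k\to0$.

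Superlinearity, which is where $p>2$ enters, comes from interpolating against the $L^2$ part of the energy. For example,
\[
|u|^p_{L^p(B)}\lesssim|\nabla u|^{p\vartheta}_{L^p(B)}|u|^{p(1-\vartheta)}_{L^2(B)}+\rho^{-d(p-2)/2}|u|^p_{L^2(B)},\qquad \vartheta=\tfrac{d(p-2)}{d(p-2)+2p},
\]
which gives the power $\vartheta+\frac p2(1-\vartheta)>1$. But the fast-geometric-convergence lemma then needs $E_0$ small. The only bound your framework provides for the local energy is the global identity \eqref{e2.18a}, i.e.\ $E_0\lesssim|u_0|_2^2$. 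Run honestly, the scheme yields a radius $\sim t^{\frac2{d(p-2)+2p}}|u_0|_2^{\frac{2(p-2)}{d(p-2)+2p}}$. This is consistent with scaling but is not \eqref{e2.25}.

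Scaling fixes exponents only once an inequality of the right form exists; it cannot create the mass dependence. Also, the mass-preserving scaling alone fixes only the $t$-exponent. The exponent of $|u_0|_1$ comes from the second invariance $u\mapsto\mu u(\mu^{p-2}t,x)$.

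\paragraph{The missing ingredient.} You need a size bound on $u$ in terms of the mass. This is the main result of \cite{5prim}: the $L^1$--$L^\infty$ (Moser--Nash) estimate $|u(s)|_{\infty}\le Cs^{-d/\lambda}|u_0|_1^{p/\lambda}$, with $\lambda=d(p-2)+p$.

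Suppose $u\le\omega$ on $B_{2\rho}(y)\times(s,s+\tau)$ and $u(s)=0$ on $B_{2\rho}(y)$. Then your spatial iteration, with the $L^2$ interpolation, does close. Now $E_0\lesssim\omega^2\rho^d$, and the smallness requirement becomes the intrinsic condition $\tau\omega^{p-2}\le\nu\rho^p$. This gives $u=0$ on $B_{\rho/2}(y)\times(s,s+\tau)$.

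Apply this on the dyadic intervals $[2^{-j-1}t,2^{-j}t]$ with $\omega_j\sim(2^{-j}t)^{-d/\lambda}|u_0|_1^{p/\lambda}$. The admissible displacements are $(2^{-j}t\,\omega_j^{p-2})^{1/p}\sim 2^{-j/\lambda}t^{1/\lambda}|u_0|_1^{(p-2)/\lambda}$, and they sum to a multiple of $R(t)$. The initial layer near $t=0$ is handled by first taking bounded data and then approximating.

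Your Stampacchia alternative cannot replace this step. Chebyshev with the mass makes $\{u>k\}$ small only for levels $k$ bounded away from $0$. Level iteration therefore yields a sup bound, not $u\equiv0$. As it stands, your proposal proves the statement only through the fallback citation.
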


\begin{remark}\label{r2.6a}\rm Under the hypotheses of Proposition \ref{p2.5} it follows by \eqref{e2.27a} that the condition $d\le p$ for the proof of \eqref{e2.22} is no longer necessary. 
\end{remark}

\begin{remark}\label{r2.6}\rm By Proposition \ref{p2.1} it follows also that the operator $A$ gene\-rates a continuous semigroup of contractions $S(t)=e^{-tA}$ in $H=L^2$ given~by
\begin{equation}\label{e2.26}
S(t)u_0=\lim_{n\to\9}\(I+\frac tn\,A\)^{-n}u_0\ \mbox{ strongly in }H,\ \ff t\ge0.\end{equation}
For $u_0\in L^2$ we have, therefore,
\begin{equation}\label{e2.27}
S(t)u_0=u(t),\ t\ge0,\ x\in\rrd,\end{equation}
where $u$ is the solution $u$ to \eqref{e1.1} given by Theorem \ref{t2.3}. By \eqref{e2.17}--\eqref{e2.21} it follows that $S(t)$ has a {\it smoothing effect} on initial data. Moreover, as seen in Theorem \ref{t2.4}, $S(t)$ leaves invariant the space $L^1\cap L^2$.
\end{remark}

\section{Second order estimates for solutions to \eqref{e2.18}}
\label{s3}
\setcounter{equation}{0}

Everywhere in the following $u$ is the weak solution to equation \eqref{e1.1} (equi\-va\-lently, \eqref{e2.11}), given by Theorem \ref{t2.3}.
\begin{theorem}	\label{t3.1}
	Let $p\ge4$, and let $u_0\in L^2$ with compact support in $\rrd$ and $|\nabla u_0|\in L^\9$. Assume that \eqref{e2.23} holds. 
Then, we have $\nabla u\in L^\9(0,T;L^\9)$, $\ff T>0$,  and
\begin{eqnarray}
&\dd\int^T_0\int_\rrd|\nabla u(t,x)|^{p-2}dxdt\le (T(\mu(T,R)))^{\frac2p}|u_0|^{\frac{2(p-2)}p}_2,\label{e3.2}\\[2mm]
&\barr{l}
\dd\int^T_0 \int_\rrd |\nabla(|\nabla u(t,x)|^{p-2})|dxdt\\
\qquad\quad
\le\dd\frac{\sqrt{d}\,(p-2)}2(2T\mu(T,R))^{\frac2p}|u_0|^{\frac{p-4}p}_2|\nabla u_0|_2,
\earr\label{e3.2a}
\end{eqnarray}
where $\mu(T,R)=V\!ol(B_{2R+R(T)})$ and $R(t)$ is given by \eqref{e2.25}. 
\end{theorem}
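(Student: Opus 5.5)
The plan is to prove the three assertions separately. The two integral estimates come from the energy identity \eqref{e2.18a}, a second order energy estimate, and H\"older's inequality on the bounded support given by Proposition \ref{p2.5}. The Lipschitz bound comes from translation invariance together with an $L^\9$-contraction argument.

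\emph{Step 1: $\nabla u\in L^\9$.} For $h\in\rrd$, the function $v(t,x)=u(t,x+h)$ is the weak solution with initial datum $u_0(\cdot+h)$. I would prove an $L^\9$-contraction $|u(t)-v(t)|_\9\le|u_0-v_0|_\9$. The argument mimics the proof of Theorem \ref{t2.4}: subtract the two equations \eqref{e2.18} and multiply by $(u-v-k)^+$ with $k=|u_0-v_0|_\9$. Since $(|a|^{p-2}a-|b|^{p-2}b)\cdot(a-b)\ge0$, this gives $\frac d{dt}\int((u-v-k)^+)^2dx\le0$. The cutoff needs care because $u-v-k$ is not in $L^2$ when $k\neq0$. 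However, by Proposition \ref{p2.5} all functions involved have support in a fixed ball, so we may test against a cutoff there. Consequently
$$|u(t,x+h)-u(t,x)|\le|\nabla u_0|_\9|h|,$$
hence $|\nabla u(t)|_\9\le|\nabla u_0|_\9$.

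\emph{Step 2: estimate \eqref{e3.2}.} By Proposition \ref{p2.5}, $u(t)$ and hence $\nabla u(t)$ vanish outside $B_{2R+R(T)}$ for $t\le T$. H\"older's inequality with exponents $\frac p{p-2}$ and $\frac p2$ on $(0,T)\times B_{2R+R(T)}$ gives
$$\int_0^T\!\!\int|\nabla u|^{p-2}\le (T\mu)^{\frac2p}\Big(\int_0^T\!\!\int|\nabla u|^p\Big)^{\frac{p-2}p}.$$
By \eqref{e2.18a} the last integral is at most $\frac12|u_0|_2^2$, which yields \eqref{e3.2}.

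\emph{Step 3: second order estimate, the main obstacle.} Formally, multiplying \eqref{e2.18} by $-\Delta u$ and integrating by parts twice gives
$$\frac12\frac d{dt}|\nabla u|_2^2+\int\sum_{i,j}\pp_i\big(|\nabla u|^{p-2}\pp_ju\big)\pp_i\pp_ju\,dx=0.$$
The integrand equals $\sum_i \langle Da(\nabla u)\pp_i\nabla u,\pp_i\nabla u\rangle\ge|\nabla u|^{p-2}|D^2u|^2$, where $a(\xi)=|\xi|^{p-2}\xi$ and $Da(\xi)\ge|\xi|^{p-2}I$. Therefore
$$\int_0^T\!\!\int|\nabla u|^{p-2}|D^2u|^2\,dxdt\le\frac12|\nabla u_0|_2^2.$$
Making this rigorous is the delicate point. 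I would regularize: replace $a$ by $a_\vp(\xi)=(|\xi|^2+\vp)^{\frac{p-2}2}\xi$, add $\vp\Delta u$, and mollify $u_0$. Then I would derive the estimate for the smooth solutions $u_\vp$, uniformly in $\vp$, using the supports controlled as in Proposition \ref{p2.5} or a weighted cutoff. Finally I would pass to the limit $u_\vp\to u$ in $C([0,T];L^2)$, using the $m$-accretivity of Proposition \ref{p2.1} (Trotter--Kato type convergence). The bound should pass to the limit through weak lower semicontinuity applied to $G_\vp=|\nabla u_\vp|^{\frac{p-2}2}D^2u_\vp$, or more robustly to $\nabla(|\nabla u_\vp|^{\frac{p-2}2}\nabla u_\vp)$, whose norm is comparable.

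\emph{Step 4: estimate \eqref{e3.2a}.} A.e. we have $|\nabla(|\nabla u|^{p-2})|\le (p-2)|\nabla u|^{p-3}|D^2u|$; the factor $\sqrt d$ absorbs the comparison between the operator norm of $D^2u$ and the component sums used in Step 3. By Cauchy--Schwarz,
$$\int_0^T\!\!\int|\nabla u|^{p-3}|D^2u|\le\Big(\int_0^T\!\!\int|\nabla u|^{p-2}|D^2u|^2\Big)^{\frac12}\Big(\int_0^T\!\!\int|\nabla u|^{p-4}\Big)^{\frac12}.$$
Here $p\ge4$ is used, since it makes $|\nabla u|^{p-4}$ bounded and integrable on the support. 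H\"older on the ball, as in Step 2, gives
$$\int_0^T\!\!\int|\nabla u|^{p-4}\le(T\mu)^{\frac4p}\big(\tfrac12|u_0|_2^2\big)^{\frac{p-4}p}.$$
Combining this with Step 3 yields \eqref{e3.2a}, up to tracking the constants $\frac{\sqrt d(p-2)}2\,2^{\frac2p}$.
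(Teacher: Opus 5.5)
\textbf{Overall.} Your Steps 2 and 4 reproduce the paper's H\"older and Cauchy--Schwarz computations. Your Step 1 takes a different route from the paper: an $L^\9$-contraction between $u$ and its translate, applied directly to the limit equation. The paper instead uses a maximum principle for the regularized difference quotients, \eqref{e3.19}. Your route is valid and simpler. No cutoff is needed there: $(u-v-k)^+\le|u-v|$ already lies in $L^2$, and its gradient $1_{\{u-v>k\}}\nabla(u-v)$ lies in $L^p$. So it is an admissible test function exactly as in Theorem \ref{t2.4}.

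\textbf{The gap: the limit passage at the end of Step 3.} Trotter--Kato gives only $u_\vp\to u$ in $C([0,T];L^2)$, hence only weak convergence of $\nabla u_\vp$.
\begin{itemize}
\item Set $V_\vp=|\nabla u_\vp|^{\frac{p-2}2}\nabla u_\vp$. A uniform bound on $\nabla V_\vp$ gives some weak limit $\Xi$ in $L^2$.
\item For weak lower semicontinuity to say anything about $u$, you must know $\Xi=\nabla V$ with $V=|\nabla u|^{\frac{p-2}2}\nabla u$. That requires $V_\vp\to V$ at least in $\cald'$.
\item Since $V_\vp$ depends nonlinearly on $\nabla u_\vp$, this needs strong (a.e.) convergence of the gradients, which your plan never provides.
\end{itemize}
This is exactly what the paper spends Claim \ref{claim} and the Aubin--Lions--Simon argument on. It obtains compactness of $|w^\vp_i|^{\frac{p-2}2}w^\vp_i$ from its $L^2(0,T;H^1)$ bound and the $W^{-1,q}$ bound on its time derivative. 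It then inverts the monotone map $z\mapsto|z|^{\frac{p-2}2}z$ to get $w^\vp_i\to D_iu$ a.e.

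A cheaper fix in your framework is monotonicity. Subtract the two equations and test with $u_\vp-u$. Then use $(|\xi|^{p-2}\xi-|\eta|^{p-2}\eta)\cdot(\xi-\eta)\ge2^{2-p}|\xi-\eta|^p$ together with your uniform bounds on $\nabla u_\vp$. These show that the error terms vanish as $\vp\to0$: those coming from $\vp\Delta u_\vp$, from $a_\vp-a$, and from mollifying $u_0$. The result is $\nabla u_\vp\to\nabla u$ in $L^p((0,T)\times\rrd)$. Some argument of this kind is indispensable.

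Note also that adding $\vp\Delta u$ destroys finite speed of propagation. So the regularized estimate must be done globally in $L^2(\rrd)$, not via controlled supports.

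\textbf{Consequence for Step 4.} Only $V$ survives the limit (or $|\nabla u|^{p/2}$, as in \eqref{e3.38d}), not $D^2u$.
\begin{itemize}
\item The weighted bound gives no control of $D^2u$ on $\{\nabla u=0\}$.
\item The weak limit of $G_\vp$ cannot a priori be identified with $|\nabla u|^{\frac{p-2}2}D^2u$.
\end{itemize}
So Step 4 has to be run from $|\nabla u|^{p-2}=|V|^{\frac{2(p-2)}p}$. Because $\frac{2(p-2)}p\ge1$ exactly when $p\ge4$, the chain rule applies and gives $|\nabla(|\nabla u|^{p-2})|\le\frac{2(p-2)}p|\nabla u|^{\frac{p-4}2}|\nabla V|$. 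After that, your Cauchy--Schwarz and H\"older steps go through and yield \eqref{e3.2a}. This chain rule is a second place where $p\ge4$ enters, besides the integrability of $|\nabla u|^{p-4}$ that you mention.
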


\n{\it Proof.} By \eqref{e2.18a} and \eqref{e2.24} we have
$$\barr{ll}
\dd\int^T_0\!\!\!\int_\rrd\!\!|\nabla u(t,x)|^{p-2}dxdt
\le\!\!\dd\int^T_0\!\!dt\(\int_\rrd|\nabla u(t,x)|^pdx\)^{\frac{p-2}p}\!\!(V\!ol(B_{2R+R(T)}))^{\frac2p}\vsp 
\le T^{\frac2p}(\mu(T,R))^{\frac2p}
\(\dd\int^T_0\int_\rrd|\nabla u(t,x)|^pdxdt\)^{\frac{p-2}p}
\le T^{\frac2p}(\mu(T,R))^{\frac2p}|u_0|^{\frac{2(p-2)}p}_2\earr$$
and so \eqref{e3.2} follows.\newpage 

 \n{\it Proof of \eqref{e3.2a}.} We note first that, since $u_0\in L^2$ and $|\nabla u_0|\in L^\9$, then by the Sobolev--Gagliardo--Nirenberg theorem combined with the Morrey theorem (see, e.g., \cite{8b}, pp.~278, 282) it follows that $u_0\in L^\9$. We approximate \eqref{e2.18}~by	
\begin{equation}\label{e3.3}
\barr{l}
\dd\frac{du}{dt}=\vp\Delta u+\divv(|\nabla u|^{p-2}\nabla u),\mbox{ a.e. } t\ge0,\ x\in\rrd,\vsp
u(0)=u_0,
\earr\end{equation}
where $\vp>0$. We may  rewrite \eqref{e3.3} as
\begin{equation}\label{e3.4}
\frac{du}{dt}+A_\vp u=0,\mbox{ a.e. } t\in(0,\9);\ u(0)=u_0,\end{equation}
where $A_\vp:D(A)\subset L^2\to L^2$ is given by
$$\barr{rl}
A_\vp u\!\!\!&=-\vp\Delta u-\divv(|\nabla u|^{p-2}\nabla u),\ \ff u\in D(A_\vp),\vsp
D(A_\vp)\!\!\!&=\{u\in H^{1};\ \nabla u\in L^p,\ \vp\Delta u+\divv(|\nabla u|^{p-2}\nabla u)\in L^2\}.\earr$$
Arguing as in the proof of Proposition \ref{p2.1}, it follows that $A_\vp$ is maximal monotone in $L^2\times L^2$ and
$A_\vp=\pp\Phi_\vp$, where $\Phi_\vp:L^2\to\,]-\9,+\9]$ is the potential function of the operator given by
$$\Phi_\vp(u)=\left\{\barr{cl}
\dd\int_\rrd\(\frac\vp2|\nabla u|^2+\frac1p|\nabla u|^p\)dx&\mbox{ if }\nabla u\in L^2\cap L^p,\vsp
+\9&\mbox{ otherwise}.\earr\right.$$
Then, by Proposition \ref{p2.2} it follows that \eqref{e3.3} has a unique solution $u_\vp\in C([0,\9);L^2)\cap L^\9(0,\9;W^{1,2})$ satisfying (see \eqref{e2.19}--\eqref{e2.20})
\begin{eqnarray}
&\dd\frac\vp2|\nabla u_\vp(t)|^2_2+\frac1p|\nabla u_\vp(t)|^p_p\le\frac\vp2|\nabla u_0|^2_2+\frac1p|\nabla u_0|^p_p,\ \ff t\ge0,\label{e3.5}\\[1mm]
&|u_\vp(t)|_2\le|u_0|_2,\ 
 \ff t\ge0,\label{e6}\\[1mm]
&\dd\frac{du_\vp}{dt}\in L^2(0,T;L^2),\ T>0,\label{e3.7}\\[1mm]
&\vp\Delta u_\vp+\divv(|\nabla u_\vp|^{p-2}\nabla u_\vp)\in L^2(0,T;L^2),\ \ff T>0.\label{e3.8}
\end{eqnarray}

\begin{claim}\label{c1} We have $u_\vp(t)\in H^2,\mbox{ a.e. }t\in(0,\9)$, and
\begin{equation}\label{e3.9}
\vp|\Delta u_\vp(t)|_2\le|f(t)|_2,\mbox{ a.e. }t\in(0,\9),\end{equation}
where $f(t)=\vp\Delta u_\vp(t)+\divv(|\nabla u_\vp(t)|^{p-2}\nabla u_\vp(t))$.\end{claim}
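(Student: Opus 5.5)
For a.e. $t$ with $f(t)\in L^2$ (by \eqref{e3.8}), $u_\vp(t)$ solves the stationary problem
$$-\vp\Delta v-\divv(|\nabla v|^{p-2}\nabla v)=-f(t),\qquad v=u_\vp(t).$$
The claim is then an elliptic regularity statement for this equation. Morally, one tests the equation with $-\Delta v$ and uses the fact that the $p$-Laplace part contributes a nonnegative term:
$$\int_\rrd \divv(|\nabla v|^{p-2}\nabla v)\,\Delta v\,dx=\sum_{i}\int_\rrd |\nabla v|^{p-2}\nabla D_iv\cdot \nabla D_i v\,dx+\frac{p-2}{4}\int_\rrd|\nabla v|^{p-4}\,|\nabla|\nabla v|^2|^2dx\ \ge0.$$
This follows after one integration by parts, moving $D_i$ from the divergence onto the flux. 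Granting this, we get $\vp|\Delta v|_2^2\le(f,\Delta v)_2\le |f|_2|\Delta v|_2$, which gives \eqref{e3.9}. Since $v\in L^2$ and $\Delta v\in L^2$, we also get $v\in H^2$ by Fourier transform.

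The issue is justification, because a priori we do not know $\Delta v\in L^2$. I would do this rigorously via the accretivity of $A_\vp$ composed with the heat semigroup, or, more directly, with difference quotients/mollification. Take $v_\delta=(I-\delta\Delta)^{-1}v$ (or $v*\rho_\delta$), so that $\Delta v_\delta$ is legitimate. The key structural fact is that the $L^2$-monotone operator $B v=-\divv(|\nabla v|^{p-2}\nabla v)=\pp\Phi(v)$ has a potential $\Phi$ which is invariant under translations and convex. Hence $\Phi((I-\delta\Delta)^{-1}v)\le\Phi(v)$, since the resolvent of the heat operator is an average of translates with a probability kernel, and $\Phi$ is convex and translation invariant (Jensen). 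From the subdifferential inequality \eqref{e2.5} we get
$$(Bv,\,v-(I-\delta\Delta)^{-1}v)_2\ge\Phi(v)-\Phi((I-\delta\Delta)^{-1}v)\ge0.$$
Now $v-(I-\delta\Delta)^{-1}v=-\delta\Delta(I-\delta\Delta)^{-1}v=-\delta\Delta v_\delta$, so $(Bv,-\Delta v_\delta)_2\ge0$.

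Next, write the equation as $-\vp\Delta v=-f-Bv$ in the distributional sense, with $Bv=-f+\vp\Delta v$ in the sense of $H^{-1}$ (or $L^2$, as $Bv=f\cdot(-1)+\dots$ is actually in $L^2$ only as a combination). To avoid this subtlety, I would apply the argument to $A_\vp=\pp\Phi_\vp$ itself, whose potential $\Phi_\vp$ is also convex and translation invariant, and for which $A_\vp v=-f(t)\in L^2$. This gives $(A_\vp v,-\Delta v_\delta)_2\ge0$, i.e. $(f,\Delta v_\delta)_2\ge0$ up to sign. That is not quite enough, since it loses the $\vp|\Delta v|^2$ term.

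So I would instead split $\Phi_\vp=\frac\vp2|\nabla\cdot|_2^2+\Phi$ and pair $-f=A_\vp v$ with $-\Delta v_\delta\in L^2$. The $\vp$-part gives $\vp(\nabla v,\nabla\Delta v_\delta)$, which equals $\vp\,\big((I-\delta\Delta)^{1/2}\Delta v_\delta,(I-\delta\Delta)^{1/2}\Delta v_\delta\big)\ge\vp|\Delta v_\delta|_2^2$ via Fourier multipliers. The $p$-part pairing $\int|\nabla v|^{p-2}\nabla v\cdot\nabla(-\Delta v_\delta)\,dx$ is well defined because $\nabla v\in L^p$ and $\nabla\Delta v_\delta\in L^p$ for fixed $\delta$. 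It is $\ge0$ by the convexity/Jensen argument above, applied in its $V$–$V'$ form. Hence $\vp|\Delta v_\delta|_2^2\le|f|_2|\Delta v_\delta|_2$, so $\vp|\Delta v_\delta|_2\le|f|_2$ uniformly in $\delta$. Letting $\delta\to0$ and using weak lower semicontinuity yields $\Delta v\in L^2$ with \eqref{e3.9}, and hence $v\in H^2$.

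The main obstacle is the rigorous positivity of the $p$-Laplace pairing without knowing second derivatives. The Jensen/translation-invariance argument for $\Phi$ under the heat resolvent is the step I would verify most carefully.
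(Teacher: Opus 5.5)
Your proposal is correct and follows essentially the paper's route. The paper also tests with $B_\lambda(u_\varepsilon)=-\Delta(I-\lambda\Delta)^{-1}u_\varepsilon$, gets nonnegativity of the $p$-Laplace pairing from $|\nabla(I-\lambda\Delta)^{-1}u_\varepsilon|_p\le|\nabla u_\varepsilon|_p$ (your Jensen/translation-invariance step), bounds the $\varepsilon$-term below by $\varepsilon|\Delta(I-\lambda\Delta)^{-1}u_\varepsilon|_2^2$ via the Fourier multiplier inequality, and lets $\lambda\to 0$; your initial detours through $Bv$ alone and through $A_\varepsilon$ without splitting can simply be dropped.
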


\begin{proof} We set
$$B_\lbb(u)=-\Delta(I-\lbb\Delta)\1 u=\frac1\lbb(u-(I-\lbb\Delta)\1 u),\ \ff u\in L^2.$$
We have
$$\barr{l}
\dd\int_\rrd|\nabla u_\vp|^{p-2}\nabla u_\vp\cdot\nabla(B_\lbb(u_\vp))dx\vsp
 \qquad=\dd\frac1\lbb\int_\rrd|\nabla u_\vp|^{p-2}\nabla u_\vp\cdot(\nabla u_\vp-\nabla(I-\lbb\Delta)\1u_\vp)dx\ge0,\earr$$
because $$|\nabla(I-\lbb\Delta)\1u_\vp|_p\le|
\nabla  u_\vp|_p,\ \ff\lbb>0.$$
This  yields
$$\vp\<\Delta u_\vp,\Delta(I-\lbb\Delta)\1u_\vp\>
\le-
(B_\lbb(u_\vp),f)_2\le|B_\lbb(u_\vp)|_2|f|_2.$$
Since
$$\barr{ll}
{}_{H^{-1}}\<\Delta u_\vp,\Delta(I-\lbb\Delta)\1u_\vp\>_{H^{1}}\!\!\!
&={\, }_{H^{-1}}\!\<\Delta u_\vp,(I-\lbb\Delta)\1\Delta u_\vp\>_{H^{1}}\vsp&\ge|\Delta(I-\lbb\Delta)\1 u_\vp|^2_2,\earr$$
we have
$$\vp|\Delta(I-\lbb\Delta)\1u_\vp|_2\le|f|_2,\ \ff\lbb>0,$$and so, letting $\lbb\to0$, we get \eqref{e3.9}, as claimed.\end{proof} 

\begin{claim}\label{c2}	For $\vp\to0$, we have, for all $T>0$,
\begin{equation}\label{e3.10}
u_\vp(t)\to u(t)\mbox{ in }L^2\mbox{ uniformly on }[0,T],\end{equation}
where $u$ is the solution given by Theorem {\rm\ref{t2.3}}.
\end{claim}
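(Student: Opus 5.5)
We will obtain \eqref{e3.10} directly from the monotonicity of the operators $A$ and $A_\vp$, by comparing $u_\vp$ and $u$ in $L^2$. Both solutions start from the same initial datum $u_0$, and we have $\Phi(u_0)<\9$ (and $\Phi_\vp(u_0)<\9$, since $\nabla u_0\in L^2\cap L^p$ by the compact support and boundedness of $|\nabla u_0|$, and $u_0\in L^2$). Hence $u$ and $u_\vp$ both belong to $W^{1,2}([0,T];L^2)$ by \eqref{e2.12}, \eqref{e2.19} and \eqref{e3.7}. We may therefore subtract \eqref{e2.18} from \eqref{e3.3}, take the $L^2$ scalar product with $u_\vp-u$, and use the rule $\frac12\frac d{dt}|u_\vp-u|_2^2=(\frac{d}{dt}(u_\vp-u),u_\vp-u)_2$, a.e. $t$. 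This gives
$$\frac12\frac d{dt}|u_\vp(t)-u(t)|^2_2+\int_\rrd(|\nabla u_\vp|^{p-2}\nabla u_\vp-|\nabla u|^{p-2}\nabla u)\cdot\nabla(u_\vp-u)dx=\vp(\Delta u_\vp,u_\vp-u)_2.$$
The second term on the left is nonnegative, so it can be dropped.

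For the right-hand side we do not use Claim \ref{c1}, since $\vp|\Delta u_\vp|_2$ is only bounded and not small. Instead we integrate by parts, which is legitimate because $u_\vp(t)\in H^1$ and $u(t)\in H^1$: indeed $u\in L^2$ has $\nabla u\in L^p$, and by \eqref{e2.24} it has support in the fixed ball $B_{2R+R(T)}$, where $L^p\subset L^2$ since $p\ge2$. This yields
$$\vp(\Delta u_\vp,u_\vp-u)_2=-\vp|\nabla u_\vp|_2^2+\vp(\nabla u_\vp,\nabla u)_2\le\frac\vp4|\nabla u(t)|_2^2.$$
By \eqref{e2.20} and Hölder's inequality on the ball,
$$|\nabla u(t)|_2^2\le\mu(T,R)^{1-\frac2p}|\nabla u_0|_p^2=:C_T$$
uniformly in $t\in[0,T]$. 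The compact support is essential here, because we have no a priori $H^1$ control of $u$ on all of $\rrd$. If one wants to avoid relying on the support, it is simplest to keep $-\vp|\nabla u_\vp|^2_2$ and just bound $\vp(\nabla u_\vp,\nabla u)_2\le\vp|\nabla u_\vp|_2|\nabla u|_2$; either way only an $L^2$ bound on $\nabla u$ over $[0,T]$ is needed.

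Integrating over $(0,t)$ and using $u_\vp(0)=u(0)=u_0$, we get
$$\sup_{t\in[0,T]}|u_\vp(t)-u(t)|_2^2\le\frac{\vp T C_T}{2}\to0,$$
which is \eqref{e3.10}. The main obstacle is justifying the integration by parts and ensuring $\nabla u(t)\in L^2$ uniformly. This is exactly where the finite speed of propagation (Proposition \ref{p2.5}) and the energy bound \eqref{e2.20} enter.
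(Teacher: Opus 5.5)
Your proof is correct, but it does not follow the paper's route. The paper obtains \eqref{e3.10} from the Kato--Trotter theorem for nonlinear contraction semigroups, so all the work lies in Lemma~\ref{l3.1}. That lemma proves the resolvent convergence $(I+\lbb A_\vp)^{-1}u_0\to(I+\lbb A)^{-1}u_0$ in $L^2$ in four steps: uniform $L^\9$, $L^p$ and energy bounds for $v_\vp$; weak compactness; identification of the limit through $A=\pp\Phi$; and a tail estimate that upgrades weak to strong convergence. You instead compare the two flows directly. Monotonicity of $z\mapsto|z|^{p-2}z$ disposes of the nonlinear terms, and the viscous term is absorbed into $\frac{\vp}{4}|\nabla u(t)|_2^2$, which finite speed of propagation and \eqref{e2.20} keep bounded on $[0,T]$. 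Your argument is shorter and gives an explicit rate $\sup_{[0,T]}|u_\vp(t)-u(t)|_2=O(\sqrt{\vp})$. The resolvent route gives no rate, but it needs nothing beyond $u_0\in L^2$.

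One small point: only the sum $\vp\Delta u_\vp+\divv(|\nabla u_\vp|^{p-2}\nabla u_\vp)$ is a priori in $L^2$. Splitting it into two separately paired terms therefore implicitly uses Claim~\ref{c1}, or else reading the pairings as dualities on $\{v\in L^2;\,\nabla v\in L^2\cap L^p\}$. Both are available, so this is harmless.

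Finally, your remark that compact support is essential is too pessimistic. $A$ commutes with translations, hence so does $S(t)$ by \eqref{e2.26}. Contractivity in $L^2$ then gives $|u(t,\cdot+he_i)-u(t)|_2\le|u_0(\cdot+he_i)-u_0|_2$, i.e.\ $|D_iu(t)|_2\le|D_iu_0|_2$. Combined with density and contractivity of both semigroups, your energy argument therefore also covers every $u_0\in L^2$.
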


The latter follows by the Kato--Trotter theorem for nonlinear semigroups of contractions (see, e.g., \cite{1}, pp.~168, 169) because we have

\begin{lemma}\label{l3.1} For all $\lbb>0$, we have
\begin{equation}\label{e3.11}
(I+\lbb A_\vp)\1u_0
{\buildrel{\vp\to0}\over\longrightarrow}
(I+\lbb A)\1u_0\mbox{ strongly in $L^2$}.  \end{equation}
\end{lemma}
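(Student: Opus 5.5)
To prove Lemma~\ref{l3.1}, fix $\lbb>0$ and $u_0\in L^2$, and set $v_\vp=(I+\lbb A_\vp)\1u_0$ and $v=(I+\lbb A)\1u_0$. Then $v_\vp$ is the unique solution in $V_\vp=\{w\in H^1;\ \nabla w\in L^p\}$ of
$$v_\vp-\lbb\vp\Delta v_\vp-\lbb\,\divv(|\nabla v_\vp|^{p-2}\nabla v_\vp)=u_0 .$$
The plan is to obtain uniform a priori bounds, pass to a weak limit, identify the nonlinear term by a monotonicity (Minty) argument, and finally upgrade to strong convergence.

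\emph{Step 1 (uniform bounds).} Multiply the equation by $v_\vp$ and integrate. This gives
$$|v_\vp|_2^2+\lbb\vp|\nabla v_\vp|_2^2+\lbb|\nabla v_\vp|_p^p=(u_0,v_\vp)_2 .$$
Hence $|v_\vp|_2\le|u_0|_2$, $\lbb|\nabla v_\vp|_p^p\le|u_0|_2^2$, and $\vp|\nabla v_\vp|_2^2\le\lbb^{-1}|u_0|_2^2$. In particular $\sqrt{\vp}\,\nabla v_\vp$ is bounded in $L^2$, so $\vp\nabla v_\vp\to0$ in $L^2$. Along a subsequence we then have $v_\vp\rightharpoonup w$ weakly in $L^2$, $\nabla v_\vp\rightharpoonup\nabla w$ weakly in $L^p$, and $\eta_\vp:=|\nabla v_\vp|^{p-2}\nabla v_\vp\rightharpoonup\eta$ weakly in $L^{p'}$, with $w\in V$.

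\emph{Step 2 (identification).} Passing to the limit in the weak form, tested with $\psi\in C^\9_0(\rrd)$, gives
$$(w,\psi)_2+\lbb\int_\rrd\eta\cdot\nabla\psi\,dx=(u_0,\psi)_2 ,$$
and by density this holds for all $\psi\in V$. To show $\eta=|\nabla w|^{p-2}\nabla w$, I would use Minty's trick. From the energy identity,
$$\lbb\int_\rrd\eta_\vp\cdot\nabla v_\vp\,dx\le(u_0,v_\vp)_2-|v_\vp|_2^2 .$$
By weak lower semicontinuity of the $L^2$ norm,
$$\limsup_{\vp\to0}\lbb\int_\rrd\eta_\vp\cdot\nabla v_\vp\,dx\le(u_0,w)_2-|w|_2^2=\lbb\int_\rrd\eta\cdot\nabla w\,dx .$$
Monotonicity of $A_V$ then yields $\eta=A_V$-type identification, i.e.\ $w+\lbb A_V w=u_0$. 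So $w=v$ by uniqueness, and the whole family converges.

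\emph{Step 3 (strong convergence).} This is the main point. Taking $\psi=w$ in the limit equation and combining with the limsup above gives $\limsup|v_\vp|_2^2\le(u_0,w)_2-\lbb\int\eta\cdot\nabla w\,dx=|w|_2^2$. Together with $\liminf|v_\vp|_2^2\ge|w|_2^2$, this gives convergence of norms. Weak convergence plus convergence of norms in the Hilbert space $L^2$ then gives $v_\vp\to v$ strongly in $L^2$.

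The only delicate part is checking that the term $\lbb\vp|\nabla v_\vp|_2^2\ge0$ has the right sign to be dropped in these inequalities, which it does. No compactness on $\rrd$ is needed.
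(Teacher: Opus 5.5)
Your overall strategy is sound, but Step~3 as written does not follow from what you invoke. The limsup from Step~2 bounds $\lbb\int_\rrd\eta_\vp\cdot\nabla v_\vp\,dx=\lbb|\nabla v_\vp|_p^p$ from \emph{above}. The energy identity, however, gives $|v_\vp|_2^2\le(u_0,v_\vp)_2-\lbb|\nabla v_\vp|_p^p$, so to bound $\limsup|v_\vp|_2^2$ from above you need a \emph{lower} bound on the gradient term. This direction, not the sign of $\lbb\vp|\nabla v_\vp|_2^2$, is the actual delicate point.

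The missing inequality is easy to supply. After the identification in Step~2, $\int_\rrd\eta\cdot\nabla w\,dx=\langle A_Vw,w\rangle=|\nabla w|_p^p$. Weak lower semicontinuity of the $L^p$-norm under $\nabla v_\vp\rightharpoonup\nabla w$ then gives
\[
\limsup_{\vp\to0}|v_\vp|_2^2\le(u_0,w)_2-\lbb\liminf_{\vp\to0}|\nabla v_\vp|_p^p\le(u_0,w)_2-\lbb|\nabla w|_p^p=|w|_2^2 .
\]
Alternatively, monotonicity tested against $w$, namely $\int_\rrd(\eta_\vp-|\nabla w|^{p-2}\nabla w)\cdot(\nabla v_\vp-\nabla w)\,dx\ge0$, gives $\liminf\int_\rrd\eta_\vp\cdot\nabla v_\vp\,dx\ge\int_\rrd\eta\cdot\nabla w\,dx$ directly.

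You should also state the identification in Step~2 cleanly. Minty's trick shows that the weak limit $\chi\in V'$ of $A_Vv_\vp$, given by $\langle\chi,\psi\rangle=\int_\rrd\eta\cdot\nabla\psi\,dx$, equals $A_Vw$. Then $\lbb A_Vw=u_0-w\in L^2$, so $w\in D(A)$ and $w=(I+\lbb A)^{-1}u_0$.

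With this repair the proof is correct, and it takes a genuinely different route from the paper's. The paper proceeds as follows:
\begin{itemize}
\item It first reduces, using that the resolvents are contractions, to $u_0\in L^2\cap L^\9$ and then to $u_0\in L^2\cap L^p$.
\item It gets $v_\vp\to v$ strongly only in $L^2_{\rm loc}$, by local compactness.
\item It identifies the limit through the subdifferential inequality for $\Phi$, i.e.\ $u_0-v\in\lbb\,\pp\Phi(v)$.
\item It upgrades to strong convergence in $L^2(\rrd)$ by testing the equation with a cut-off in $|x|/n$ times $v_\vp$. This yields the $\vp$-uniform tail bound $\int_{[|x|\ge2n]}v_\vp^2\,dx\le\frac Cn+\int_{[|x|\ge n]}u_0^2\,dx$, and this is where the extra $L^p$ bound on $v_\vp$ is used.
\end{itemize}

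Your argument replaces compactness and tightness by the exact energy identity and the Radon--Riesz property of the Hilbert space $L^2$ (weak convergence plus convergence of norms). It therefore needs no density reduction, no $L^\9$ or $L^p$ bounds on $v_\vp$, and no localization. It is in the spirit of the abstract fact that Mosco convergence $\Phi_\vp\to\Phi$ implies strong convergence of the resolvents of $\pp\Phi_\vp$. The paper's route is more hands-on and gives strong $L^2_{\rm loc}$ convergence (hence a.e.\ along subsequences) before the limit is identified. For this lemma, though, your variational argument is shorter and cleaner.
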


\begin{proof} We set $v_\vp=(I+\lbb A_\vp)\1u_0$, that is, $v_\vp\in H^2$, and 
\begin{equation}\label{e3.12}
v_\vp-\lbb\vp\Delta v_\vp-\lbb\divv(|\nabla v_\vp|^{p-2}\nabla v_\vp)=u_0\mbox{ in }\cald'(\rrd).
\end{equation}\newpage
Since $L^1\cap L^\9$ is dense in $L^2$ and $(I+\lbb A_\vp)\1$, $\vp>0$, are contractions (hence equicontinuous) on $L^2$, it suffices to prove \eqref{e3.12} for $u_0\in L^2\cap L^\9$. But then, as in the proof of Theorem \ref{t2.4}, it follows that
\begin{equation}\label{e3.13b}
	|v_\vp|_\9\le|u_0|_\9,\ \ff\vp>0.
\end{equation}
Hence, by \eqref{e3.13b} and an approximation argument, we have the estimate
\begin{equation}\label{e3.13a}
\frac12|v_\vp|^2_2+\lbb\vp|\nabla v_\vp|^2_2+\lbb|\nabla v_\vp|^p_p\le\frac12|u_0|^2_2,\ \ff\vp>0,\end{equation}
and
$$\barr{ll}
\dd\frac1p\,|v_\vp|^p_p\!\!\!
&+\lbb\vp(p-1)\dd\int_\rrd|\nabla v_\vp|^2|v_\vp|^{p-2}dx\vsp&+\lbb(p-1)\dd\int_\rrd|\nabla v_\vp|^p|v_\vp|^{p-2}dx
\le\dd\frac1p\,|u_0|_p.\earr$$
Hence,
\begin{equation}\label{e3.13aa}
|v_\vp|_p\le|u_0|_p,\ \ff u_0\in L^2\cap L^p,\end{equation}
and this implies that along a subsequence, again denoted $\{\vp\}\to0$, we have
\begin{equation}\label{e3.13}
\barr{rcll}
v_\vp&\to&v&\mbox{strongly in $L^2\loc$ and weakly in $L^2$}\vsp 
\nabla v_\vp&\to&\nabla v&\mbox{weakly in $(L^p)^d$}\vsp
\vp\nabla v_\vp&\to&0&\mbox{strongly in $(L^2)^d$}\vsp
|\nabla v_\vp|^{p-2}\nabla v_\vp&\to&\eta&\mbox{weakly in }(L^{p'})^d.\earr\end{equation}
By \eqref{e3.12}--\eqref{e3.13} it follows that
\begin{equation}\label{e3.14}
v-\lbb\,\divv\,\eta=u_0\mbox{ in }\cald'(\rrd).\end{equation}
On the other hand, we have
$$\barr{l}
-\dd\int_\rrd\divv(|\nabla v_\vp|^{p-2}\nabla v_\vp)(v_\vp-w)dx\ge\frac1p\int_\rrd|\nabla v_\vp|^pdx\vsp
\hfill-\dd\frac1p\int_\rrd|\nabla w|^p dx,\ \ff w\in L^2,\ |\nabla w|\in L^p.\earr$$
This yields
$$\frac\lbb p\int_\rrd|\nabla v_\vp|^pdx-\int_\rrd(u_0-v_\vp+\vp\lbb\Delta v_\vp)(v_\vp-w)dx\le\frac\lbb p\int_\rrd|\nabla w|^pdx$$
and for $\vp\to0$ it follows by \eqref{e3.13} that\newpage
$$\dd\frac\lbb p\int_\rrd|\nabla v|^pdx\le\frac\lbb p\int_\rrd|\nabla w|^pdx
+\dd\int_\rrd(u_0-v)(v-w)dx,\ \ff w\in L^2,\ |\nabla w|\in L^p,$$
which means that $\divv\,\eta\in\pp\Phi(v)$, where $\Phi$ is the function \eqref{e2.4}. Hence, by Proposition \ref{p2.1},  $\divv\,\eta=-\divv|\nabla v|^{p-2}\nabla v,$ a.e. in $\rrd$, and thus $v\in D(A)$ and $v=(I+\lbb A)\1u_0$.

To conclude the proof, it remains to be shown that 
\begin{equation}\label{e3.15}
v_\vp\to v\mbox{ strongly in $L^2$ as $\vp\to0$}.\end{equation}
By the density of $L^p\cap L^2$ in $L^2$ and since $|(I+\lbb A_\vp)\1u-(I+\lbb A_\vp)\1v|_2\le|u-v|_2$, $\ff u,v\in L^2$, it suffices to prove \eqref{e3.15} for $u_0\in L^2\cap L^p$.

Let $\eta\in C^2([0,\9))$ be such that $\eta\ge0$, $\eta(r)=0$, $\ff r\in[0,1]$, $\eta(r)=1$, $\ff r\ge2$. If we multiply \eqref{e3.12} by $\eta_n v_\vp$, where $\eta_n(x)=\eta\(\frac{|x|}n\)$, we get
$$\barr{l}
\dd\int_\rrd v^2_\vp(x)\eta_n(x)dx+\lbb\vp\int_\rrd\nabla v_\vp(x)\cdot
\(\eta_n(x)\nabla v_\vp(x)+\frac1n\eta'\(\frac{|x|}n\)\frac x{|x|} v_\vp(x)\)dx\vsp 
\qquad+\lbb\dd\int_\rrd|\nabla v_\vp(x)|^{p-2}\nabla v_\vp(x)\cdot
\(\eta_n(x)\nabla v_\vp(x)+\frac1n\eta'\(\frac{|x|}n\)\frac x{|x|}v_\vp(x)\)dx\vsp 
\qquad=\dd\int_\rrd u_0(x)v_\vp(x)\eta_n(x)dx.
\earr$$
This yields
$$\barr{l}
\dd\frac12\int_\rrd v^2_\vp(x)\eta_n(x)dx
+\dd\frac{\lbb\vp}n
\int_\rrd\(\nabla v_\vp(x)\cdot\frac x{|x|}\)\eta'\(\frac{|x|}n\)v_\vp(x)dx\vsp 
\qquad+\dd\frac\lbb n\int_\rrd|\nabla v_\vp(x)|^{p-2}
\(\nabla v_\vp(x)\cdot\frac x{|x|}\)\eta'\(\frac{|x|}n\)v_\vp(x)dx\vsp 
\qquad\le\dd\frac12\int_\rrd u^2_0(x)\eta_n(x)dx.\earr$$
Taking into account that by \eqref{e3.13a}--\eqref{e3.13aa}\vspace*{-3mm}
$$\barr{r}
\dd\int_\rrd(|\nabla v_\vp|\,|v_\vp|dx
+|\nabla v_\vp|^{p-1}|v_\vp|)dx\le|\nabla v_\vp|_2|v_\vp|_2+|\nabla v_\vp|^{p-1}_p|v_\vp|_p\le C,\\ \ff \vp>0,\earr$$\vspace*{-6mm}
we get
$$\int_{[|x|\ge2n]}v^2_\vp(x)dx\le\frac Cn+\int_{[|x|\ge n]}u^2_0(x)dx,\ \ff \vp>(0,1),\ n\in\nn,$$
where $C$ is independent of $\vp$.  
Combined with \eqref{e3.13}, the latter yields \eqref{e3.15}.~\end{proof}

\n{\it Proof of \eqref{e3.2a} $($continued$)$.} For each $\vp>0$ and $i=1,...,d$ we set
$$w^\vp_i=D_iu_\vp,\ w^\vp=(w^\vp_i)^d_{i=1}=\nabla u_\vp.$$
Taking into account that by \eqref{e3.9}, $u_\vp\in L^2(0,T;H^2)$, $\ff T>0$, we have $w^\vp_i\in L^2(0,T;H^1)$. Differentiating formally \eqref{e3.3} with respect to each $i$, we~get\vspace*{-3mm}
\begin{equation}\label{e3.16}
	\barr{ll}
	\dd\frac{\pp w^\vp_i}{\pp t}=\divv(|w^\vp|^{p-2}\nabla w^\vp_i
	+(p-2)|w^\vp|^{p-4}w^\vp(\nabla w^\vp_i{\cdot}w^\vp))\\
	\hfill	
	+\vp\Delta w^\vp_i\ \mbox{in }\cald'((0,\9)\times\rrd),\vsp
	w^\vp_i(0,x)=D_iu_0(x),\ \ i=1,2,...,d.\earr\end{equation}
One might suspect that $w^\vp_i$ is the strong solution to \eqref{e3.16} in the space $L^2(0,\9;H^1)$. 
To prove rigorously the exact significance of \eqref{e3.16} we consider the finite differences
$$\barr{r}
u^h_{\vp,i}(t,x)=\dd\frac1h(u_\vp(t,x+he_i)-u_\vp(t,x)),\ e_i=(0,0,...,1,0,...,0),\vsp
(t,x)\in(0,\9)\times\rrd,\earr$$
and get 
\begin{equation}
\barr{l}
	\dd\frac\pp{\pp t}u^h_{\vp,i}(t,x)=\divv
	(|\nabla u_\vp(t,x+h e_i)|^{p-2}
	\nabla u_\vp(t,x+he_i)\label{e3.19a}\vsp
	 \quad\qquad-|\nabla u_\vp(t,x)|^{p-2}\nabla u_\vp(t,x))	+\vp\Delta u^h_{\vp,i}(t,x) 
 \mbox{ in }\cald'((0,\9)\times\rrd,\vsp
 	u^h_{\vp,i}(0,x)=\dd\frac1h(u_0(x+he_i)-u_0(x)),\ x\in\rrd.\earr\end{equation}
Define the function $F:\rrd\to\rrd$ by
$$F(x)=|x|^{p-2}x, \ \ff x\in\rrd.$$
We have
$$\barr{ll}
F(y)-F(x)\!\!\!&=\dd\int^1_0\frac d{d\tau}F(\tau y-(1-\tau)x)d\tau\vsp
&=\dd\int^1_0 DF(\tau y-(1-\tau)x)\cdot(y-x)d\tau,\  \ff x,y\in\rrd,\earr$$
where $DF=(D_jF_i)_{i,j=1}$ and
$$D_jF_i(x)=(p-2)|x|^{p-4}x_ix_j+\delta_{ij}|x|^{p-2},\ i,j=1,...,d.$$
We set
$$f^h_{\vp,i}(t,x,\tau)=\nabla(\tau u_\vp(t,x+he_i)+(1-\tau)u_\vp(t,x)),\ \tau\in(0,1),\ (t,x)\in(0,\9)\times\rrd.$$
Then, \eqref{e3.19a} turns into
\begin{eqnarray}
&&\hspace*{-8mm}	\dd\frac\pp{\pp t}u^h_{\vp,i}(t,x)\label{e3.21b}\\ 
&&\hspace*{-8mm}=\divv\((p-2)\dd\int^1_0|f^h_{\vp,i}(t,x,\tau)|^{p-4}
	f^h_{\vp,i}(t,x,\tau)(f^h_{\vp,i}(t,x,h)\cdot\nabla u^h_{\vp,i}(t,x))d\tau\right.\nonumber\\
&&\hspace*{-8mm}	\left.+\dd\int^1_0|f^h_{\vp,i}(t,x,\tau)|^{p-2}d\tau
	\,\nabla u^h_{\vp,i}(t,x)\)+\vp\Delta u^h_{\vp,i}(t,x).
	\nonumber\end{eqnarray}
Taking into account that $u^h_{\vp,i}\in L^2(0,\9;H^2)$, we get by \eqref{e3.21b}
\begin{eqnarray}
&&	\dd\frac12
	|u^h_{\vp,i}(t)|^2_2+ 
	\dd\int^t_0ds\int_\rrd
	\(\int^1_0|f^h_{\vp,i}(s,x,\tau)|^{p-2}d\tau\)
	|\nabla u^h_{\vp,i}(s,x)|^2dx\label{e3.22b}\\[1mm] 
	&& +(p-2)\dd\int^t_0ds\int_\rrd
	\(\dd\int^1_0|f^h_{\vp,i}(s,x,\tau)|^{p-4}
	(f^h_{\vp,i}(s,x,h)\cdot
	\nabla u^h_{\vp,i}(s,x))^2 dxd\tau\)\nonumber\\[1mm]
	&&  +\vp\dd\int^t_0\!\!ds\int_\rrd
	|\nabla u^h_{\vp,i}(s,x)|^2dx=\dd\frac12|u^h_{\vp,i}(0)|^2_2,\ \ff t\ge0. 
	\nonumber\end{eqnarray}
On the other hand, we know that for $h\to0$, 
$$\barr{rcll}
\nabla u^h_{\vp,i}&\to&\nabla w^\vp_i&\mbox{ in }L^2(0,T;L^2),\vsp 
|\nabla u_\vp(t,x+he_i)|^{p-2}&\to&|\nabla u_\vp(t,x)|^{p-2}&\mbox{ in }L^{\frac p{p-2}}((0,T)\times\rrd).\earr$$
Hence, for $h\to0$, along a subsequence, again denoted $\{h\}\to0$,
$$\nabla u^h_{\vp,i}(t,x)\to \nabla w^\vp_i(t,x),\ |\nabla u_{\vp,i}(t,x+h e_i)|\to|\nabla u_\vp(t,x)|,\mbox{ a.e. }(t,x)\in(0,T)\times\rrd,$$
and so, by \eqref{e3.22b} and by   Fatou's lemma, we get  the estimate
\begin{equation}\label{e3.19aaaa}
	\barr{ll}
	\dd\frac12|w^\vp_i(t)|^2_2+\int^t_0ds\int_\rrd
	(|w^\vp(s,x)|^{p-2}|\nabla w^\vp_i(s,x)|^2\vsp
	\qquad+(p-2)|w^\vp(s,x)|^{p-4}
	(w^\vp(s,x){\cdot}\nabla w^\vp_i(s,x))^2)dx\vsp
	\qquad+\vp\dd\int^t_0ds\int_\rrd|\nabla w^\vp_i(s,x)|^2dx\le\frac12|D_iu_0|^2_2,\ i=1,2,...,d.\earr
\end{equation}
Moreover, we have 
\begin{eqnarray}
w^\vp_i\in L^\9(0,\9;L^1\cap L^\9)),&&\hspace*{-3mm} \ff i=1,...,d,\label{e3.18}\\[1mm]
|w^\vp_i(t)|_\9\le|w^\vp_i(0)|_\9=|D_iu_0|_\9,&&\hspace*{-3mm}\ \ff t\ge0,\ i=1,...,d,\label{e3.19}\\[1mm]
|w^\vp_i(t)|_1\le|w^\vp_i(0)|_1=|D_iu_0|_1,&&\hspace*{-3mm}\ \ff t\ge0,\ i=1,...,d.\label{e3.20}
\end{eqnarray}
Indeed, if $M_i=|u^h_{\vp,i}(0)|_\9$, we have by \eqref{e3.21b}  
$$\barr{l}
\dd\frac\pp{\pp t}(u^h_{\vp,i}-M_i)\vsp
= \divv\!\(\!(p{-}2)\dd\int^1_0\!\!|f^h_{\vp,i}(t,x,\tau)|^{p-4}
 f^h_{\vp,i}(t,x,\tau)
 (f^h_{\vp,i}(t,x,h){\cdot}\nabla(u^h_{\vp,i}(t,x){-}M_i))d\tau\right.\vsp 
 \left. +\dd\int^1_0|f^h_{\vp,i}(t,x,\tau)|^{p-2}d\tau
 \nabla(u^h_{\vp,i}(t,x)-M_i)\)+
 \vp\Delta(u^h_{\vp,i}(t,x)-M_i).
 \earr$$
 Multiplying by $(u^h_{\vp,i}-M_i)^+$ and integrating over $\rrd$ yields
$$\frac d{dt}|(u^h_{\vp,i}(t)-M_i)^+|^2_2\le0,\mbox{ a.e. }t>0,$$and, therefore, $u^h_{\vp,i}(t,x)\le M_i$, a.e. $(t,x)\in(0,\9)\times\rrd.$ Similarly, it follows that $u^h_{\vp,i}(t,x)\ge-M_i$, a.e. $(t,x)\in(0,\9)\times\rrd.$
 Hence, \eqref{e3.19} follows.
 
 In particular, \eqref{e3.19aaaa} 
 implies that
 $$|w^\vp|^{\frac{p-2}2}|\nabla w^\vp_i|,\
 |w^\vp|^{\frac{p-4}2}(w^\vp\cdot\nabla w^\vp_i)\in L^2(0,\9;L^2),\ i=1,...,d,$$
 and so, by \eqref{e3.19a} and \eqref{e3.19aaaa} it follows that $w^\vp_i$ is the solution to \eqref{e3.16} in the following strong sense
 \begin{equation}
 w^\vp_i\in L^2(0,T;H^1),\ \dd\frac{dw^\vp_i}{dt}\in L^2(0,T;H\1), \ T>0,\label{e3.27b}\end{equation}
 \begin{eqnarray}
&&\hspace*{-7mm}	\dd\frac{dw^\vp_i}{dt}(t)=\divv(|w^\vp(t)|^{p-2}\nabla w^\vp_i(t)
	+(p-2)|w^\vp(t)|^{p-4}w^\vp(t)
	(\nabla w^\vp(t){\cdot}\nabla w^\vp_i(t)))\nonumber\\ 
&&	\ \ \qquad+\vp\Delta w^\vp_i(t),\mbox{ a.e. }t\in(0,\9),\mbox{ in }H\1,\label{e3.23a}\\
&&\hspace*{-7mm}	w^\vp_i(0)=D_iu_0.\nonumber
	\end{eqnarray}
 Next, to get \eqref{e3.20} we multiply \eqref{e3.23a} by $\calx_\delta(w^\vp_i)$, where $\calx_\delta$ is the function \eqref{e2.22a}, and let $\delta\to0$. We omit the details (see the proof of \eqref{e2.22}).

In particular, it follows by \eqref{e3.19aaaa} that
\begin{equation}\label{e3.28b}\barr{r}
\dd\int^\9_0\!\!\!\!dt\int_\rrd
\( |w^\vp(t,x)|^{p-2}|\nabla w^\vp_i(t,x)|^2
+\vp|\nabla w^\vp_i(t,x)|^2\)dx\le\frac12|D_iu_0|^2_2,\\ \ff i=1,...,d.\earr\end{equation}
This yields
\begin{equation}\label{e3.22}
\int^\9_0dt\int_\rrd|\nabla(|w^\vp(t,x)|^{\frac p2})|^2 dxdt
\le\frac{dp^2}8|\nabla u_0|^2_2.\end{equation}
Since by \eqref{e3.18} $\{w^\vp_i\}_\vp$ is bounded in $L^2((0,T)\times \rrd)$, $\ff T>0$, it follows in particular  that along a subsequence, again denoted $\{\vp\}\to0$,
\begin{eqnarray}
	w^\vp_i&\to&w_i\ \ \mbox{weakly  in }L^2((0,T)\times\rrd),\ \ff T>0,\label{e3.23}
	\end{eqnarray}
where by \eqref{e3.10} and the closedness of the operator $D_i$ on $L^2(0,T;L^2)$, \mbox{$w_i=D_iu$.}

By \eqref{e3.19} it follows also that
\begin{equation}\label{e3.27a}
	|\nabla u|\in L^\9((0,T)\times L^\9).\end{equation}
We denote by 
$\frac d{dt}(|w^\vp_i(t)|^{\frac p2-1}w^\vp_i(t))$  
the distributional derivative of the function $t\to|w^\vp_i(t)|^{\frac p2-1}w^\vp_i(t)$, that is,
$$\barr{r}
\raise-2,9mm\hbox{${{}}_{\cald'}$}\!\!\<\dd\frac d{dt}|w^\vp_i(t)|^{\frac p2-1}w^\vp_i(t),\vf\>_{\!\!\cald}
=-\dd\int^T_0\int_\rrd
\frac d{dt}|w^\vp_i(t)|^{\frac p2-1}w^\vp_i(t)
\frac{d\vf}{dt}(t,x)dxdt,\vsp
\ff\vf\in C^\9_0((0,T)\times\rrd)=:\cald.\earr$$

\begin{claim}\label{claim} Let $1<q<\min\(\frac d{d-1},2\)$ if $d\ge3,$ $q\in(1,2)$ if $d=2$, and $q=2$ if $d=1$. Then, 
	\begin{equation}\label{e3.32}
		\sup_{0<\vp\le1}\int^T_0\left\|\frac d{dt}(|w^\vp_i(t)|^{\frac p2-1}w^\vp_i(t))\right\|_{W^{-1,q}}dt<\9.
	\end{equation}
\end{claim}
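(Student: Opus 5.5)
We compute the time derivative by the chain rule and then estimate each resulting term in $W^{-1,q}$ by duality, using the bounds \eqref{e3.19}, \eqref{e3.20} and \eqref{e3.28b}. By \eqref{e3.27b}--\eqref{e3.23a}, $w^\vp_i\in L^2(0,T;H^1)$ with $\frac{dw^\vp_i}{dt}\in L^2(0,T;H\1)$, and $w^\vp_i$ is bounded by $|D_iu_0|_\9$ by \eqref{e3.19}. Set $\psi(r)=|r|^{\frac p2-1}r$, which is $C^1$ with $\psi'(r)=\frac p2|r|^{\frac p2-1}$ bounded on bounded sets. A standard regularization in $t$ then gives the chain rule
$$\frac d{dt}\psi(w^\vp_i)=\psi'(w^\vp_i)\frac{dw^\vp_i}{dt}\quad\mbox{in }\cald'((0,T)\times\rrd).$$
Test this against $\vf\in C^\9_0(\rrd)$ and use \eqref{e3.23a}. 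Writing $G^\vp_i=|w^\vp|^{p-2}\nabla w^\vp_i+(p-2)|w^\vp|^{p-4}w^\vp(w^\vp\cdot\nabla w^\vp_i)+\vp\nabla w^\vp_i$, we get
$$\Big\<\frac d{dt}\psi(w^\vp_i),\vf\Big\>=-\int_\rrd G^\vp_i\cdot\nabla\vf\,\psi'(w^\vp_i)\,dx-\int_\rrd G^\vp_i\cdot\nabla w^\vp_i\,\psi''(w^\vp_i)\vf\,dx.$$
The first term is a divergence term; the second is a nonnegative-type "reaction" term, whose integrand is of order $|w^\vp|^{p-2}|\nabla w^\vp_i|^2|w^\vp_i|^{\frac p2-2}$.

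For the divergence term, boundedness of $w^\vp$ and $p\ge4$ give $|G^\vp_i\psi'(w^\vp_i)|\le C(|w^\vp|^{p-2}+\vp)|\nabla w^\vp_i|$. By Cauchy--Schwarz with weight $|w^\vp|^{p-2}$, \eqref{e3.28b}, and $|w^\vp|^{p-2}\le C|w^\vp|$ (bounded $w^\vp$, $p\ge3$), its $L^1(0,T;L^1)$ norm is at most
$$C\Big(\int_0^T\!\!\int|w^\vp|^{p-2}|\nabla w^\vp_i|^2\Big)^{\frac12}\Big(\int_0^T\!\!\int|w^\vp|^{p-2}\Big)^{\frac12}+\sqrt\vp\,(\ldots)\le C,$$
uniformly in $\vp\le1$. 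Here we used \eqref{e3.20} and also $\vp\int|\nabla w^\vp_i|^2\le C$. For the $\vp$-part, since $\psi'(w^\vp_i)$ is bounded and vanishes like $|w^\vp_i|^{\frac p2-1}$, we bound $\vp|\nabla w^\vp_i||w^\vp_i|^{\frac p2-1}$ by Cauchy--Schwarz using $\vp\int|\nabla w_i^\vp|^2$ and $\int|w^\vp_i|^{p-2}\le C|D_iu_0|_1$. Thus the divergence term is bounded in $L^1(0,T;L^1)$. To pass to $W^{-1,q}$ we need more: $\sup|\nabla\vf|$ is not controlled by $\|\vf\|_{W^{1,q'}}$. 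So we should exploit the $L^2$ structure of the flux instead. It is better to note that the flux is actually bounded in $L^2((0,T)\times\rrd)$, i.e. in $L^1(0,T;L^2)$, since $|w^\vp|^{p-2}\le C|w^\vp|^{\frac{p-2}2}$, and \eqref{e3.28b} gives the $L^2$ bound directly. Its divergence is then bounded in $L^1(0,T;H\1)$.

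The reaction term is a nonnegative finite measure with total mass at most $C\int_0^T\!\!\int|w^\vp|^{p-2}|\nabla w^\vp_i|^2\le C$, since $p\ge4$ makes $\psi''$ bounded. So it is bounded in $L^1(0,T;\calm_b)$, with the $\vp$-term handled likewise. Since $q<\frac d{d-1}$, we have $q'>d$, so Morrey's theorem gives $W^{1,q'}\subset C_b$, hence $\calm_b\subset W^{-1,q}$; for $d=1$, $q=2$ and $H^1\subset C_b$. Finally, we must combine $H\1$ with $W^{-1,q}$. The divergence part is only locally in $W^{-1,q}$ for $q<2$, because $L^2\not\subset L^q$ globally. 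I expect this to be the main obstacle, and I would resolve it using the finite-speed support bound, Proposition \ref{p2.5}, for the approximants, or else by choosing $\vf$ in the appropriate dual norm. Failing that, I would write the flux bound as $L^1\cap L^2\subset L^q$, using the $L^1$ estimate obtained above. Since $L^1\cap L^2\subset L^q$ for $1\le q\le2$, the divergence term is bounded in $L^1(0,T;W^{-1,q})$, which yields \eqref{e3.32}.
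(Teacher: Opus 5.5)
Your proposal is correct and follows essentially the paper's route: the chain rule $\frac{d}{dt}\psi(w^\vp_i)=\psi'(w^\vp_i)\frac{dw^\vp_i}{dt}$, testing \eqref{e3.23a} against $\psi'(w^\vp_i)\vf$, bounding the zero-order term in $L^1$ and using Morrey's embedding ($q'>d$), and bounding the flux in $L^q$. The only difference is in that last step: the paper uses H\"older ($L^2\cdot L^{\frac{2q}{2-q}}$ with $g=|w^\vp_i|^{\frac{p-2}2}$), whereas your $L^1\cap L^2\subset L^q$ interpolation works equally well.

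Two minor corrections. First, the reaction term is not nonnegative: its density has the sign of $w^\vp_i$, because $\psi''(r)$ has the sign of $r$; only its total variation is needed, so this is harmless. Second, the finite-speed alternative you mention is not available for the uniformly parabolic $\vp$-regularized problem, so the $L^1\cap L^2$ argument is the one to keep.
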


\begin{proof} 
	To prove \eqref{e3.32},   it suffices to mention that by \eqref{e3.27b}  the function $t\to w^\vp_i(t)$ is $H\1$-valued absolutely continuous on $[0,T]$ and a.e. $t\in(0,T)$,
	$$\frac d{dt}w^\vp_i(t)=\lim_{h\to0}\ \frac{w^\vp_i(t+h)-w^\vp_i(t)}h\mbox{\ \ strongly in }H\1,\mbox{ hence in }W^{-1,q}.$$
	This yields

\begin{equation}\label{e3.33a}
\hspace*{-4mm}\barr{l}	\dd\int^T_0\!\!\int_\rrd\!|w^\vp_i(t)|^{\frac p2-1}w^\vp_i(t)\frac{d\vf}{dt}dxdt\vsp
 =\dd\lim_{h\to0} \frac1{h}\int^{T-h}_0\!\!\int_\rrd
|w^\vp_i(t)|^{\frac p2-1}w^\vp_i(t)
(\vf(t+h,x)-\vf(t,x))dxdt\vsp
=\dd\lim_{h\to0} \frac1{h}\int^{T}_h\!\!\!\int_\rrd
\!(|w^\vp_i(t{-}h,x)|^{\frac p2-1}w^\vp_i(t{-}h,x)
{-}|w^\vp_i(t,x)|^{\frac p2-1}w^\vp_i(t,x))
\vf(t,x)dxdt\vsp
=-\dd\lim_{h\to0}\int^T_hdt\int_\rrd\int^1_0
\frac p2|\tau w^\vp_i(t,x)+(1-\tau)w^\vp_i(t-h,x)|^{\frac{p-2}2}d\tau\vsp
 \ \qquad\quad\qquad\qquad\quad\qquad\ \ \ 
\dd\frac1h(w^\vp_i(t,x)-w^\vp_i(t-h,x))\vf(t,x)dx\vsp
=-\dd\lim_{h\to0}\frac p2\int^T_h\!\!\!dt
\raise-2,9mm\hbox{${}_{H\1}$}\!\!\!\<\!\frac1h (w^\vp_i(t){-}w^\vp_i(t{-}h)),\!
\dd\int^1_0\!\!\!
|\tau w^\vp_i(t){+}(1{-}\tau)w^\vp_i(t{-}h)|^{\frac{p-2}2}
d\tau\,\vf(t)\!\>_{\!\!H^1}\vsp
=-\dd\frac p2\int^T_0
\raise-2,9mm\hbox{${}_{H\1}$}\!\!\<\frac{dw^\vp_i(t)}{dt}
|w^\vp_i(t)|^{\frac{p-2}2},\vf(t)\>_{\!H^1}dt,\ 
\ff\vf\in C^\9_0((0,T)\times\rrd),\earr\hspace*{-16mm}\end{equation}
 where we used that the functions in the second slot of the dualization above are bounded in $L^2(0,T;H^1)$, uniformly in $h$, hence (selecting a subsequence, if necessary) weakly converge in $L^2(0,T;H^1)$ to
 $|w^\vp_i|^{\frac{p-2}2}\,\vf$, as $h\to0$. We note that we  have also used above that \eqref{e3.27b}   implies $w_\vp\in C([0,T];L^2)$. So, to prove \eqref{e3.32}, it suffices to show that the last expression in \eqref{e3.33a} is a continuous (linear) function in $\vf$ with respect to the norm of $L^2(0,T;W^{1,\frac q{q-1}})$, whose norm is uniformly bounded in $\vp\in(0,1]$. To this end, we first note that by \eqref{e3.23a} the last expression in \eqref{e3.33a}   is equal to
\begin{equation}\label{e3.34prim}
\frac p2\int^T_0\int_\rrd \(f(t,x)\cdot(\vf(t,x)\nabla g(t,x)+g(t,x)\nabla\vf(t,x))\)dx\,dt,\end{equation}
where 
$$\barr{l}
f=|w^\vp|^{p-2}\nabla w^\vp_i+(p-2)w^\vp(w^\vp\cdot\nabla w^\vp_i)|w^\vp|^{p-4}+\vp\nabla w^\vp_i,\vsp
g=|w^\vp_i|^{\frac{p-2}2}.\earr$$
But, since $\frac q{q-1}>d$, the absolute value of the term in \eqref{e3.34prim} up to a constant is dominated by

$$\barr{l} \dd\int^T_0
(|f(t)\nabla g(t)|_1\|\vf(t)\|_{W^{1,\frac q{q-1}}}
+|f(t)|_2|g(t)|_{\frac{2q}{2-q}}|\nabla\vf(t)|_{\frac q{q-1}})dt\vsp 
\qquad\le\dd\int^T_0(|f(t)\nabla g(t)|_1+|f(t)|^2_2+|g(t)|^2_{\frac{2q}{q-2}})dt 
\|\vf\|_{L^\9(0,T;W^{1,\frac q{q-1}})}\vsp
\qquad\le C_p\(|D_iu_0|^{\frac{p-4}2}_\9
|D_iu_0|^2_2\right.\vsp
\qquad\left.+(|\nabla u_0|^{p-2}_\9+1)|D_iu_0|^2_2
+T|D_iu_0|^{\frac{2(2-q)}q}_2|D_iu_0|^{\frac{4(p-1)}q}_\9\)
\|\vf\|_{L^\9(0,T;W^{1,\frac q{q-1}})},\earr$$
where we have used \eqref{e3.19aaaa} and \eqref{e3.19}. 
So, \eqref{e3.32} is proved. 

We note that
$$|\nabla(|w^\vp_i|^{\frac p2})|=
|\nabla(w^\vp_i|^{\frac{p-2}2}w^\vp_i)|,\mbox{ a.e. on }(0,T)\times\rrd,$$
and so, by \eqref{e3.22} and \eqref{e3.32} we have
$$\int^T_0\int_\rrd|\nabla(\eta^\vp_i(t,x))|^2dx\,dt
+\int^T_0\left\|\frac d{dt}(\eta^\vp_i(t))\right\|_{W^{-1,q}}dt\le C,$$
where
$$\eta^\vp_i(t,x)=|w^\vp_i(t,x)|^{\frac{p-2}2}w^\vp_i(t,x),\mbox{ a.e. }(t,x)\in(0,T)\times\rrd,$$
and $C$ is independet of $\vp$. In particular, it follows that
$$\int^T_0\(\int_{B_R}|\nabla\eta^\vp_i(t,x)|^2dx+\left\|\frac d{dt}\eta_i(t)\right\|_{W^{-1,q}(B_R)}\)dt\le C,$$
for each ball $B_R=\{x;\,|x|<R\}$.

Then, by the Aubin-Lions--Simon compactness theorem (\cite{8a}) applied to the spaces $H^1(B_R)\subset L^2(B_R)\subset W^{-1,q}(B_R)$, where $R>0$ is arbitrary,    we infer that, for each $i=1,...,d$, the sequence $\{\eta^{\vp_n}_i\}_n$ is compact  
in $L^2(0,T;L^2_{\rm loc})$. Hence, along a subsequence \mbox{$\{\vp_n\}\to0$,} as $n\to\9$, we have
$$\eta^{\vp_n}_i\to\eta_i\mbox{ strongly in }L^2(0,T;L^2_{\rm loc})$$
and so, selecting the further subsequence,
\begin{equation}\label{e3.37d}
	\eta^{\vp_n}_i(t,x)\to\eta_i(t,x),\mbox{ a.e. }(t,x)\times\rrd.\end{equation}
Taking into account that the function $z\to|z|^{\frac{p-2}2}\,z$ is monotonically increasing on $\rr$, we have
$$w^{\vp_n}_i\to\wt w_i,\mbox{ a.e. in }(0,T)\times\rrd$$
and, recalling \eqref{e3.23}, we infer that $\wt w_i=w_i$ and so, by \eqref{e3.37d} it follows that $\eta_i=|w_i|^{\frac{p-2}2}w_i$, a.e. on $(0,T)\times\rrd$ and, therefore, it follows that, for $n\to\9$,
$$|w^{\vp_n}_i|^{\frac p2}\to|w_i|^{\frac p2}\mbox{ strongly in $L^2(0,T;L^2_{\rm loc})$},\ i=1,...,d.$$
Since, by \eqref{e3.23}, $\{|\nabla(w^{\vp_n})|\}_n$ is bounded in $L^2(0,T;L^2)$, we also have
$$\nabla(|w^{\vp_n}|^{\frac p2})\to\nabla(|w|^{\frac p2})\mbox{ weakly in }L^2(0,T;(L^2)^d),$$
and so, by weak lower-semicontinuity it follows by \eqref{e3.22} that
$$\int^\9_0\int_\rrd|\nabla(|w|^{\frac p2})|^2dxdt\le
\frac{dp^2}8|\nabla u_0|^2_2.$$	
Hence, we have	
\begin{equation}\label{e3.38d}
\int^T_0\int_\rrd
|\nabla|\nabla u|^{\frac p2}|^2dxdt
\le\frac{dp^2}8 |\nabla u_0|^2_2.\end{equation}
This yields
\begin{equation}
\label{e3.32a}
\barr{ll}
\dd\int^T_0\!\! \int_\rrd
|\nabla(|\nabla u|^{p-2})|dxdt   
=\frac{2(p-2)}p\dd\int^T_0\int_\rrd
|\nabla(|\nabla u|^{\frac p2})|\,|\nabla u|^{\frac{p-4}2}dxdt\vsp
\qquad\qquad\le\sqrt{\dd\frac d2}\,(p-2)|\nabla u_0|_2
\(\dd\int^T_0|\nabla u(t)|^{{p-4}}_{p-4}dt\)^{\frac12}\vsp 
\qquad\qquad\le\sqrt{\dd\frac d2}\,(p-2)|\nabla u_0|_2
(V\!ol(B_{R(T)}))^{\frac 2p}
\(\dd\int^T_0|\nabla u(t)|^{p}_pdt\)^{\frac{p-4}{2p}}\vsp 
\qquad\qquad\le2^{\frac2p}\,\sqrt{d}\,
\dd\frac{p-2}{2}|\nabla u_0|_2
|u_0|^{\frac{p-4}p}_2\ T^{\frac2p}
(V\!ol(B_{R(T)}))^{\frac 2p},\ \ff T>0,\earr\hspace*{-10mm}\end{equation}
where we used \eqref{e2.18a}.\end{proof}

\begin{remark}\label{r3.3}\rm There are several earlier works concerning the first order re\-gu\-la\-rity for weak solutions to the parabolic $p$-Laplace equation \eqref{e1.1} (see, e.g., \cite{1c}, \cite{5prim}, \cite{8d}, \cite{6}). However, it  seems that the estimate \eqref{e3.3} is new. As mentioned earlier, a related local second-order result was established in \cite{9b} (see also \cite{12prim}). But, therein, integrability with respect to time is only proved over time intervals $(\delta,T]$ with $\delta>0$. We, however, crucially need integrability over $[0,T]$, as will be seen in the next section.  
\end{remark}

\section{The probabilistic representation}\label{s4}
\setcounter{equation}{0}

We shall assume here, as in Section \ref{s4}, that 
$p\ge4$ and
\begin{eqnarray}
	&u_0\in W^{1,\9},\ \ u_0\ge0,\ \mbox{a.e. in }\rrd\label{e4.1}\\[1mm]
	&support\ u_0\subset B_R,\ \dd\int_\rrd u_0(x)dx=1.\label{e4.3}
	\end{eqnarray}
Then, as seen earlier in Theorem \ref{t2.3} and Theorem \ref{t2.4}, equation \eqref{e1.1} has a unique weak solution $u$ on $(0,\9)\times\rrd$,  satisfying \eqref{e2.17}--\eqref{e2.20} and \begin{eqnarray}
	&u\in C([0,\9);L^2)\cap L^\9(0,\9;L^2),\ u\ge0,
	\mbox{ a.e. on }(0,\9)\times\rr,\label{e4.4}\\[1mm]
	&u\in L^2(0,\9;H^1)\cap L^\9((0,\9)\times\rrd),\label{e4.5}\\[1mm]
	&\dd\int_\rrd u(t,x)dx=1,\ \ff t\ge0.\label{e4.6}
	\end{eqnarray}
Moreover, the estimates \eqref{e3.2}--\eqref{e3.2a} hold. 

As seen earlier in \eqref{e1.2},   we may rewrite equation \eqref{e1.1} as the Fokker--Planck equation 
\begin{equation}\label{e4.6a}
	\barr{l}
	\dd\frac{\pp u}{\pp t}-\Delta(|\nabla u|^{p-2}u)+\divv(\nabla(|\nabla u|^{p-2})u)=0\mbox{ in }\cald'((0,\9)\times\rrd),\vsp
	u(0,x)=u_0(x),\ x\in\rrd,\earr
\end{equation}
and taking into account that $u\in L^\9((0,T)\times\rrd)$, by \eqref{e3.2}, \eqref{e3.2a},  we have the estimates
\begin{eqnarray}
\int^T_0\!\!\!\int_\rrd|\nabla u(t,x)|^{p-2}|u(t,x)|dxdt
&\!\!\!\le\!\!\!&
C_T(R)|u_0|^{\frac{2(p-2)}2}_2,\label{e4.7}\\[2mm]
\int^T_0\!\!\!\int_\rrd
\!|\nabla(|\nabla u(t,x)|^{p-2})|
|u(t,x)|dxdt
&\!\!\!\le\!\!\!&
C^1_T(R)T|u_0|^{\frac{p-4}p}_2
|\nabla u_0|_2,\   \qquad\label{e4.8}
\end{eqnarray}for all $T>0$. Moreover, by Proposition \ref{p2.5}, we have that
\begin{equation}\label{e4.9}
support\ u(t)\subset B_{2R+R(t)},\ \ff t\ge0.
\end{equation}
Also, by \eqref{e2.16}  it follows that $u:[0,\9)\to L^2$ is continuous. Then, we finally obtain:

\begin{theorem}\label{t4.1} Let $u$ be the solution to \eqref{e1.1} under hypotheses \eqref{e4.1}--\eqref{e4.3} on the initial data $u_0$. Then, there exists a $d$-dimensional $(\calf_t)$-Brownian motion on a stochastic basis $(\ooo,\calf,(\calf_t)_{t\ge0},\mathbb{P})$ and an $(\calf_t)$-progressively measurable map $X:[0,\9)\times\ooo\to\rrd$, continuous in $t$, solving  the McKean--Vlasov stochastic differential equation
\begin{equation}\label{e4.10}
	dX(t)=\nabla(|\nabla u(t,X(t))|^{p-2})dt
	+\sqrt{2}|\nabla u(t,X(t))|^{\frac{p-2}2}dW(t)
\end{equation}
such that
\begin{equation}\label{e4.11}
	u(t,x)dx=\mathbb{P}\circ X(t)\1(dx),\ t\ge0.
	\end{equation}
\end{theorem}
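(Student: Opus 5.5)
The plan is to apply the superposition principle for Fokker--Planck equations in the nonlinear form of \cite[Section 2]{1a}, which rests on \cite[Theorem 2.5]{7}. The solution $u$ will be regarded as a given function, and the equation \eqref{e4.6a} will be viewed as a \emph{linear} Fokker--Planck equation for the curve of probability measures $\mu_t(dx)=u(t,x)dx$. Its coefficients are frozen:
$$a(t,x)=|\nabla u(t,x)|^{p-2}I_d,\qquad b(t,x)=\nabla(|\nabla u(t,x)|^{p-2}).$$
Since $\sigma\sigma^*/2=a$ with $\sigma=\sqrt2\,|\nabla u|^{\frac{p-2}2}I_d$, the diffusion part matches \eqref{e4.10}.

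Once the hypotheses of the superposition principle are checked, it yields a probability measure $P$ on $C([0,\9);\rrd)$ that solves the martingale problem for $L_t=a\Delta+b\cdot\nabla$ and has one-dimensional marginals $\mu_t$. The standard correspondence between martingale problems and weak solutions then gives a stochastic basis, an $(\calf_t)$-Brownian motion $W$ and a continuous progressively measurable $X$ solving \eqref{e4.10}. If needed, the representation theorem for continuous local martingales with absolutely continuous brackets is applied on an enlarged space. By construction, \eqref{e4.11} holds.

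The hypotheses to verify are as follows.

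\textbf{(i)} The curve $t\mapsto\mu_t$ is a weakly continuous curve of probability measures. This follows from $u\ge0$ and mass conservation \eqref{e4.6}, which comes from Theorem \ref{t2.4} together with Remark \ref{r2.6a}. It also uses the continuity $u\in C([0,\9);L^2)$: combined with the uniform support bound \eqref{e4.9}, this gives continuity in $L^1$, hence narrow continuity, including at $t=0$.

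\textbf{(ii)} $u$ is a distributional solution in the sense \eqref{e1.4}--\eqref{e1.5}. Starting from the weak formulation \eqref{e2.18}, test with $\vf$ and write
$$|\nabla u|^{p-2}\nabla u\cdot\nabla\vf=\nabla u\cdot(|\nabla u|^{p-2}\nabla\vf)=-u\,\divv(|\nabla u|^{p-2}\nabla\vf)=-u\(|\nabla u|^{p-2}\Delta\vf+\nabla(|\nabla u|^{p-2})\cdot\nabla\vf\).$$
The integration by parts is justified because $|\nabla u|^{p-2}\in L^1_{\rm loc}(0,T;W^{1,1}_{\rm loc})$ by Theorem \ref{t3.1}, $u\in L^\9$, and $\nabla u\in L^\9$. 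I would do this at the level of the $\vp$-approximations or mollifications, where the product rule is clear, and then pass to the limit. The initial datum is handled through the continuity of $u$ in $L^2$ at $t=0$.

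\textbf{(iii)} The integrability condition \eqref{e1.5secund}, i.e.
$$\int_0^T\!\!\int_\rrd\big(|a|+|b|\big)\,u\,dx\,dt<\9.$$
This is exactly \eqref{e4.7}--\eqref{e4.8}, obtained from Theorem \ref{t3.1} together with $|u|_\9\le|u_0|_\9$.

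The main obstacle is (ii) in the precise form required by \cite{1a}, and in particular the need for integrability over the full interval $[0,T]$ rather than only $[\delta,T]$. This is where \eqref{e3.2a} with $p\ge4$ enters, through the bound $|\nabla u|^{\frac{p-4}2}\in L^\9$. The issue is to ensure that the coefficients are Borel functions defined everywhere, for which I fix Borel versions, and that the product rule for $|\nabla u|^{p-2}\nabla\vf$ holds with only $W^{1,1}_{\rm loc}$-regularity of $|\nabla u|^{p-2}$. The latter is fine because $\nabla\vf$ is smooth and bounded, so the product rule applies in $W^{1,1}_{\rm loc}$.
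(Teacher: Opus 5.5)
Your proposal is correct and follows the paper's route. The paper proves Theorem \ref{t4.1} simply by invoking the superposition-principle result of \cite[Section 2]{1a}, which rests on \cite{7}, with frozen coefficients $|\nabla u|^{p-2}I_d$ and $\nabla(|\nabla u|^{p-2})$, and with the integrability supplied by \eqref{e4.7}--\eqref{e4.8}. Your checks of narrow continuity, of the distributional formulation via the $W^{1,1}_{\rm loc}$ product rule, and of the integrability condition just make explicit the hypotheses that the paper leaves implicit.
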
	

\begin{proof} This is a direct consequence of \cite[Section 2]{1a} (see also \cite[Chapter 5]{2} and the proof of \cite[Theorem 3.3]{3}).\end{proof}



\begin{thebibliography}{nn}
	
 \bibitem{1c} Alikakos, N.D., Evans, L.C., Continuity of the gradient for weak solutions to degenerate parabolic equations, {\it J. Math. Pures Appl.}, 62 (3) (1983), 253-268.\vspace*{-1,3mm}  
 
	
\bibitem{1} Barbu, V.,  {\it Nonlinear Differential Equations of Monotone Type in Banach Spaces}, Springer,  New York,  2010.\vspace*{-1,3mm} 

\bibitem{1a} Barbu, V., R\"ockner, M.,  From nonlinear Fokker--Planck equations to solutions of distribution dependent SDEs, {\it Annals Probab.}, 48 (4) (2020), 1902-1920.\vspace*{-1,3mm}    

\bibitem{2} Barbu, V., R\"ockner, M.,   Nonlinear  Fokker--Planck flows and their pro\-ba\-bi\-listic counterparts, Lecture Notes in Mathematics, 2353 (2024), Springer.\vspace*{-1,3mm}

\bibitem{3} Barbu, V., 
Rehmeier, M., R\"ockner, M.,   $p$-Brownian motion and the $p$-Laplacian, {\it Annals of Probability} (to appear).\vspace*{-1,3mm} 

	
\bibitem{4} Barbu, V., Gruber, G., Rehmeier, M., R\"ockner, M., The Leibenson Process, arXiv: 2508.12979.\vspace*{-1,3mm}

\bibitem{5c} Beznea, L., C\^\i mpean, I., R\"ockner, M., Regularization of the superposition principle: Potential theory meets Fooker--Planck equations, (2026) arXiv:2603.05174v.2.  	
https://doi.org/10.48550/arXiv.2603.05174\vspace*{-1,3mm} 


\bibitem{5prim} B\"ogelein, V., Ragnedda, F., Vernier Piro, S., Vespri, V., Moscr--Nash kernel estimates for degenerate parabolic equations, {\it J. Funct. Anal.}, 272 (2017), 2956-2986.\vspace*{-1,3mm}


\bibitem{8b} Brezis, H., {\it Functional Analysis, Sobolev Spaces and Partial Differential Equations}, Springer, New York. Dordrecht. Heidelberg. London, 2010.\vspace*{-1,3mm}


\bibitem{8d} Di Benedetto, E., Friedman, A., Regularity of solutions to nonlinear degenerate parabolic systems, {\it J. reine Angew. Math.}, 349 (1984), 83-128. \vspace*{-1,3mm}

\bibitem{6} Duzgun, F.G., Mosconi, S., Vespri, V., Anisotropic Sobolev embedding and the speed of propagation for parabolic equations, {\it J. Evol. Equations}, 19 (2019), 845-882.\vspace*{-1,3mm} 

\bibitem{9b} Feng, Y., Parviainen, M., Sarsa, S., On the second-order regularity of solutions to the parabolic $p$-Laplace equation, {\it J. Evol. Equ.}, 22 (2022), article nr. 6.\vspace*{-1,3mm}

\bibitem{12prim} Feng, Y., Parviainen, M., Sarsa, S., Second-order Sobolev regularity results for the generalized $p$-parabolic equation, {\it J. Funct. Anal.}, 288 (2025), 9-27.\vspace*{-1,3mm}


\bibitem{8a} Simon, J., Compact sets in $L^p(0,T;B)$, {\it Ann.Mat. Pura Appl.}, 146 (1987), 65-96.\vspace*{-1,3mm}

\bibitem{9bb} McKean, Jr. H.P., A class of Markov processes associated with nonlinear parabolic equations, {\it Proceedings of the National Academy of Sciences of the United states of America}, 56 (6) (1966), 1907-1911.\vspace*{-1,3mm}

 
\bibitem{7} Trevisan, D., Well-posedness of multi-dimensional diffusion processes with weakly differentiable coefficients, {\it Electron. J. Probab.}, 21 (22) (2016), 1-41.
  	
	
\end{thebibliography}
\end{document}